\newtheorem{Th}{Theorem}
\newtheorem{Prop}[Th]{Proposition}
\newtheorem{Lemma}[Th]{Lemma}
\newtheorem{Cor}[Th]{Corollary}
\theoremstyle{remark}
\newtheorem{Remark}{Remark}
\newtheorem*{Example}{Example}
\newtheorem*{Def}{Definition}
\newtheorem{Prob}{Problem}
\newcommand{\cb}{\mathcal{ B}}
\newcommand{\ca}{\mathcal{ A}}
\newcommand{\cc}{\mathcal{ C}}
\newcommand{\cm}{\mathcal{ M}}
\newcommand{\R}{\mathbb{{R}}}
\newcommand{\T}{\mathbb{{T}}}
\newcommand{\Z}{\mathbb{{Z}}}
\newcommand{\xbm}{(X,\mathcal{ B},\mu)}
\newcommand{\ov}{\overline}
\newcommand{\va}{\varphi}
\newcommand{\ot}{\otimes}
\newcommand{\la}{\lambda}
\newcommand{\bez}{\nopagebreak\hspace*{\fill}\nolinebreak$\Box$}
\begin{document}

\title{A note on quasi-similarity of Koopman operators}
\author{K. Fr\k{a}czek
\and M. Lema\'nczyk}
\address{K.\ Fr\k{a}czek \and M.\ Lema\'nczyk\\ Faculty of Mathematics and Computer Science\\ Nicolaus
Copernicus University\\ ul. Chopina 12/18, 87-100 Toru\'n, Poland}

\email{fraczek@mat.uni.torun.pl, mlem@mat.uni.torun.pl}

\subjclass[2000]{37A05, 37A30, 37A35}

\thanks{Research partially supported by Polish MNiSzW grant
N N201 384834; partially supported by Marie Curie ``Transfer of
Knowledge'' EU program -- project MTKD-CT-2005-030042 (TODEQ)}

\begin{abstract} Answering a question of A. Vershik we construct two non-weakly isomorphic ergodic
automorphisms for which the associated unitary (Koopman)
representations are Markov quasi-similar. We also discuss metric
invariants of Markov quasi-similarity in the class of ergodic
automorphisms.
\end{abstract}

\maketitle

\section*{Introduction}
Markov operators appear in the classical ergodic theory in the
context of joinings, see the monograph \cite{Gl}. Indeed, assume
that $T_i$ is an ergodic automorphism of a standard probability
Borel space $(X_i,\mathcal{B}_i,\mu_i)$, $i=1,2$. Consider $\la$ a
{\em joining} of $T_1$ and $T_2$, i.e.\ a $T_1\times
T_2$-invariant probability measure on $(X_1\times
X_2,\cb_1\ot\cb_2)$ with the marginals $\mu_1$ and $\mu_2$
respectively. Then the operator
$\Phi_{\la}:L^2(X_1,\cb_1,\mu_1)\to L^2(X_2,\cb_2,\mu_2)$
determined by \begin{equation}\label{q1} \langle \Phi_\lambda
f_1,f_2\rangle_{L^2(X_2,\cb_2,\mu_2)}=\langle f_1\ot{\mathbf
1}_{X_2},{\mathbf 1}_{X_1}\ot f_2\rangle_{L^2(X_1\times X_2
,\cb_1\ot\cb_2,\lambda)}\end{equation} is Markov (i.e.\ it is a
linear contraction which preserves the cone of non-negative
functions and
$\Phi_{\la}{\mathbf1}={\mathbf1}=\Phi_{\la}^\ast{\mathbf 1}$) and
moreover \begin{equation}\label{q2} \Phi_\la\circ
U_{T_1}=U_{T_2}\circ\Phi_\la,\end{equation} where
$U_{T_i}:L^2(X_i,\cb_i,\mu_i)\to L^2(X_i,\cb_i,\mu_i)$  stands for
the associated unitary operator: $U_{T_i}f=f\circ T_i$ for $f\in
L^2(X_i,\cb_i,\mu_i)$, $i=1,2$, which is often called a {\em
Koopman operator}. In fact, each Markov operator
$\Phi:L^2(X_1,\cb_1,\mu_1)\to L^2(X_2,\cb_2,\mu_2)$ satisfying the
equivariance property~(\ref{q2}) is of the form $\Phi_{\la}$ for a
unique joining $\la$ of $T_1$ and $T_2$ (see e.g.\
\cite{Le-Pa-Th}, \cite{Ry}). Markov operators corresponding to
ergodic joinings are called {\em indecomposable}.

In order to classify dynamical systems one usually considers the
{\em measure-theoretic isomorphism}, i.e.\ the equivalence given
by the existence of an invertible map
$S:(X_1,\cb_1,\mu_1)\to(X_2,\cb_2,\mu_2)$ for which $S\circ
T_1=T_2\circ S$. The measure-theoretic (metric) isomorphism
implies spectral equivalence of the Koopman operators $U_{T_1}$
and $U_{T_2}$; indeed, $U_{S^{-1}}$ (where $U_{S^{-1}}f_1=f_1\circ
S^{-1}$ for $f_1\in L^2(X_1,\cb_1,\mu_1)$) provides such an
equivalence. The converse does not hold, see e.g.\ \cite{An}; we
also recall that all Bernoulli shifts are spectrally equivalent
while the entropy classify them measure-theoretically \cite{Or}.
One may ask whether there can be some other natural classification
of dynamical systems which lies in between metric and spectral
equivalence.

In \cite{Ve}, A. Vershik considers the quasi-similarity problem in
the class of Markov operators. Recall that if $A_i$ is a bounded
linear operator of a Hilbert space $H_i$, $i=1,2$, and if there is
a bounded linear operator $V:H_1\to H_2$ whose range is dense and
which intertwines $A_1$ and $A_2$, then $A_2$ is said to be a {\em
quasi-image} of $A_1$. By duality, $A_2$ is a quasi-image of $A_1$
if and only if there exists a $1-1$ bounded linear  operator
$W:H_2\to H_1$ intertwining $A_2$ and $A_1$. If also $A_1$ is a
quasi-image of $A_2$ then the two operators are called {\em
quasi-similar}. The main problem taken up in \cite{Ve} is a study
of quasi-similarity in the class of Markov operators, i.e.\ given
a Markov operator $A_i$ of $H_i=L^2(X_i,\cb_i,\mu_i)$, $i=1,2$, we
investigate a possible quasi-similarity of $A_1$ and $A_2$, where
we additionally require $V$ to be a Markov operator between the
corresponding $L^2$-spaces. In what follows, we will call such a
quasi-similarity {\em Markov quasi-similarity}.

Notice that each  Koopman operator is also a Markov operator. It
is known (see e.g.\ \cite{Le-Pa}, \cite{Ve}) that if an
intertwining Markov operator $\Phi:L^2(X_1,\cb_1,\mu_1)\to
L^2(X_2,\cb_2,\mu_2)$  is unitary then it has to be of the form
$U_S$ where $S$ provides a measure-theoretic isomorphism. On the
other hand the quasi-similarity of unitary operators implies their
spectral equivalence (see Section~\ref{spectral} below).
Therefore, Markov quasi-similarity lies in between the spectral
and measure-theoretic equivalence of dynamical systems. One of
questions raised by Vershik in \cite{Ve} is the following:
\begin{equation}\label{qvershik}\begin{array}{l}
\mbox{\em Do there exist two automorphisms that are not
isomorphic}\\\mbox{\em but are Markov
quasi-similar?}\end{array}\end{equation} In order to answer this
question notice that any weakly isomorphic automorphisms (see
\cite{Si}) $T_1$ and $T_2$ are automatically Markov quasi-similar;
indeed, the weak isomorphism means that there are $\pi_1$ and
$\pi_2$ which are homomorphisms between $T_1$ and $T_2$ and $T_2$
and $T_1$ respectively, then $U^\ast_{\pi_1}$ and $U^\ast_{\pi_2}$
yield Markov quasi-similarity of $T_1$ and $T_2$. Hence, if $T_1$
and $T_2$ are weakly isomorphic but not isomorphic, we obtain the
positive answer to the question~(\ref{qvershik}). Examples of
weakly isomorphic but not isomorphic systems are known in the
literature, see e.g.\ \cite{Kw-Le-Ru}, \cite{Le}, \cite{Ru},
including the case of K-automorphisms \cite{Ho}. It follows that
the notion of Markov quasi-similarity has to be considered as an
interesting refinement of the notion of weak isomorphism, and in
Vershik's question~(\ref{qvershik}) we have to replace ``not
isomorphic'' by ``not weakly isomorphic''.

The main aim of this note is to answer positively this modified
question~(\ref{qvershik}) (see Proposition~\ref{vershik-answer}
below). We would like to emphasize that despite a spectral flavor
of the definition, Markov quasi-similarity is far from being the
same as spectral equivalence. For example, partly answering
Vershik's question raised at a seminar at Penn State University in
2004 whether entropy is an invariant of Markov quasi-similarity,
we show that zero entropy as well as K-property  are invariants of
Markov quasi-similarity of automorphisms, while they are not
invariants of spectral equivalence of the corresponding unitary
operators. These facts and related problems will be discussed in
Sections~\ref{AA}-\ref{BB}.

\section{Quasi-similarity of unitary operators implies their
unitary equivalence}\label{spectral} Assume that $U$ is a unitary
operator of a separable Hilbert space $H$. Given $x\in H$ by
$\Z(x)$ we denote the {\em cyclic space generated by} $x$, i.e.\
$\Z(x)=\ov{\mbox{span}}\{U^nx:\:n\in\Z\}$. We will use a similar
notation $\Z(y_1,\ldots,y_k)$ for the smallest closed
$U$-invariant subspace containing $y_i$, $i=1,\ldots,k$. Denote by
$\T$ the (additive) circle. Then the Fourier transform of the
(positive) measure $\sigma_x$ -- called the {\em spectral measure
of} $x$ -- is given by
$$\widehat{\sigma}_x(n):=\int_{\T}e^{2\pi int}\,d\sigma_x(t)=\langle
U^nx,x\rangle\text{ for each }n\in\Z.$$ Similarly the sequence
$(\langle U^nx,y\rangle)_{n\in\Z}$ is the Fourier transform of the
(complex) spectral measure $\sigma_{x,y}$ of $x$ and $y$. Given a
spectral measure $\sigma$ we denote
$$
H_\sigma=\{x\in H:\:\sigma_x\ll\sigma\}.$$ Then $H_\sigma$ is a
closed $U$-invariant subspace called a {\em spectral subspace} of
$H$.

It follows from Spectral Theorem for unitary operators (see e.g.\
\cite{Ka-Th} or \cite{Pa}) that there is a  decomposition
\begin{equation}\label{decomp} H=H_{\sigma_1}\oplus H_{\sigma_2}\oplus\ldots\end{equation}
into spectral subspaces such that for each $i\geq1$ $$
H_{\sigma_i}= \bigoplus_{k=1}^{n_i}\Z(x^{(i)}_k),$$ where
$\sigma_i\equiv\sigma_{x^{(i)}_1}\equiv\sigma_{x^{(i)}_2}\equiv\ldots$
($n_i$ can be infinity), and $\sigma_i\perp\sigma_j$ for $i\neq
j$.
 The class $\sigma_U$ of all finite measures equivalent to the sum
$\sum_{i\geq1}\sigma_i$ is then called the {\em maximal spectral
type of} $U$. Another important invariant of $U$ is the spectral
multiplicity function $M_U:\T\to\{1,2,\ldots\}\cup\{\infty\}$ (see
\cite{Ka-Th}, \cite{Pa}) which is defined $\sigma$-a.e., where
$\sigma$ is any measure belonging to the maximal spectral type of
$U$. Note that decomposition~(\ref{decomp}) is far from being
unique but if $$H=\bigoplus_{i=1}^\infty H_{\sigma'_i},
\;\;H_{\sigma'_i}=\bigoplus_{k=1}^{n'_i}\Z(y_k^{(i)})$$ is another
decomposition~(\ref{decomp}) in which $\sigma_i\equiv\sigma'_i$,
$i\geq1$, then $n_i=n'_i$ for $i\geq1$. Recall that the essential
supremum $m_U$ of $M_U$ (called the {\em maximal spectral
multiplicity of} $U$) is equal to
\begin{equation}\label{qqq} \inf\{m\geq1:\:\Z(y_1,\ldots,y_m)=H\;\;\;\mbox{for
some}\;\;y_1,\ldots,y_m\in H \};\end{equation} if there is no
``good'' $m$, them $m_U=\infty$.

Assume that $U_i$ is a unitary operator of a separable Hilbert
space $H_i$, $i=1,2$. Let $V:H_1\to H_2$ be a bounded linear
operator which intertwines $U_1$ and $U_2$. Then for each $n\in\Z$
and $x_1\in H_1$
$$\langle U_2^nVx_1,Vx_1\rangle=\langle U_1^nx_1,V^\ast
Vx_1\rangle,$$ so by elementary properties of spectral measures
\begin{equation}\label{q3} \sigma_{Vx_1}=\sigma_{x_1,V^\ast
Vx_1}\ll \sigma_{x_1}.\end{equation} Assuming additionally that
Im$(V)$ is dense, an immediate consequence of~(\ref{q3}) is that
the maximal spectral type of a quasi-image of $U_1$ is absolutely
continuous with respect to $\sigma_{U_1}$. It is also clear that
given $y^{(1)}_1,\ldots,y^{(1)}_m\in H_1$ we have
$$\ov{V(\Z(y^{(1)}_1,\ldots,y^{(1)}_m))}=\Z(Vy^{(1)}_1,\ldots,Vy^{(1)}_m).$$
This in turn implies that  the  maximal spectral multiplicity of a
quasi-image of $U_1$ is at most $m_{U_1}$.

\begin{Prop}\label{equivalence} If $U_1$ and $U_2$ are
quasi-similar then they are spectrally equivalent.\end{Prop}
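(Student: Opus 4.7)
The plan is to strengthen the two observations just made --- that for a quasi-image $U_2$ of $U_1$ one has $\sigma_{U_2}\ll\sigma_{U_1}$ and the maximal spectral multiplicity of $U_2$ is at most $m_{U_1}$ --- from the global setting to every Borel subset of $\T$, and from there to deduce equality of the multiplicity functions themselves.

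First I would apply the quasi-similarity in both directions: letting $V\colon H_1\to H_2$ and $W\colon H_2\to H_1$ be the bounded intertwiners with dense range, the two observations applied to $V$ and to $W$ immediately give $\sigma_{U_1}\equiv\sigma_{U_2}$ and $m_{U_1}=m_{U_2}$. Equality of maximal spectral types alone does not suffice for unitary equivalence, so the key step is to localize. The intertwining relation $VU_1=U_2V$ extends by the bounded Borel functional calculus to $V\,\mathbf{1}_A(U_1)=\mathbf{1}_A(U_2)\,V$ for every Borel $A\subseteq\T$; hence $V$ restricts to a bounded operator $V_A\colon H_1^A\to H_2^A$ between the corresponding spectral subspaces. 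Its range is still dense in $H_2^A$: if $Vx_n\to y\in H_2^A$ in $H_2$, then
\[
V_A(\mathbf{1}_A(U_1)x_n)=\mathbf{1}_A(U_2)\,Vx_n\to\mathbf{1}_A(U_2)\,y=y.
\]
The same argument applied to $W$ gives a dense-range operator $W_A\colon H_2^A\to H_1^A$, so $U_1|_{H_1^A}$ and $U_2|_{H_2^A}$ are themselves quasi-similar. Applying the maximal-multiplicity bound to this restricted quasi-similarity, and noting that the maximal spectral multiplicity of $U_i|_{H_i^A}$ equals the essential supremum of $M_{U_i}$ over $A$, one obtains $\mathrm{ess\,sup}_{A}\,M_{U_1}=\mathrm{ess\,sup}_{A}\,M_{U_2}$ for every Borel $A\subseteq\T$.

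To conclude, for $\{1,2,\ldots\}\cup\{\infty\}$-valued functions a coincidence of essential suprema on every Borel set forces pointwise equality almost everywhere: otherwise there would exist $k\in\N$ with $B:=\{M_{U_1}\geq k+1,\ M_{U_2}=k\}$ of positive measure (possibly after swapping the roles of $U_1$ and $U_2$), and then $\mathrm{ess\,sup}_{B}\,M_{U_1}\geq k+1>k=\mathrm{ess\,sup}_{B}\,M_{U_2}$, a contradiction. Combined with $\sigma_{U_1}\equiv\sigma_{U_2}$, the spectral theorem then yields the unitary equivalence of $U_1$ and $U_2$. The main obstacle is the localization step --- verifying that dense range descends from $V$ to each $V_A$ --- since this is exactly what allows strengthening the global equality of maximal multiplicities to a pointwise equality of multiplicity functions.
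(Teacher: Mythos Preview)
Your argument is correct and takes a genuinely different route from the paper's. The paper fixes a decomposition $H_1=\bigoplus_i H_{\sigma_i^{(1)}}$ into spectral subspaces of uniform multiplicity, pushes each summand forward to $F_i:=\overline{V(H_{\sigma_i^{(1)}})}$, argues that the $F_i$ are pairwise orthogonal and exhaust $H_2$ with the same spectral types $\sigma_i^{(1)}\equiv\sigma_i^{(2)}$, and then uses $W$ to go back and show by contradiction that each $F_i$ has uniform multiplicity $n_i^{(1)}$. Your proof instead localizes via the Borel functional calculus: from $VU_1=U_2V$ (and hence $VU_1^\ast=U_2^\ast V$, since the $U_i$ are unitary) you obtain $V\mathbf{1}_A(U_1)=\mathbf{1}_A(U_2)V$, restrict the quasi-similarity to every spectral subspace $H_i^A$, and apply the maximal-multiplicity bound there to get equality of $\mathrm{ess\,sup}_A M_{U_i}$ for all Borel $A$, which forces $M_{U_1}=M_{U_2}$ a.e. Your approach is cleaner and more conceptual---it isolates the single localization step as the crux and avoids the back-and-forth decomposition bookkeeping---while the paper's argument stays closer to the concrete decomposition~(\ref{decomp}) and does not invoke the functional calculus explicitly. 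The dense-range verification for $V_A$ that you flag as the main obstacle is handled correctly by your projection argument.
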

\begin{proof} Assume that $V:H_1\to H_2$ and $W:H_2\to H_1$ intertwine
 $U_1$ and
$U_2$ and have dense ranges. In view of~(\ref{q3}) both operators
$U_1$ and $U_2$ have the same maximal spectral types. Consider a
decomposition~(\ref{decomp}) for $U_1$: $H_1=\bigoplus_{i\geq1}
H_{\sigma^{(1)}_i}$ and let $F_i:=\ov{V(H_{\sigma^{(1)}_i})}$ for
$i\geq1$. The subspaces $F_i$ are obviously $U_2$-invariant and
let $\sigma^{(2)}_i$ ($n^{(2)}_i$) denote the maximal spectral
type (the maximal spectral multiplicity) of $U_2$ on $F_i$. It
follows from~(\ref{q3}) that $\sigma^{(2)}_i\ll\sigma^{(1)}_i$ for
$i\geq1$ and $\sigma^{(2)}_i, \sigma^{(2)}_j$ are mutually
singular (in particular, $F_i\perp F_j$) whenever $i\neq j$.
Moreover, $n^{(2)}_i\leq n^{(1)}_i$, $i\geq1$. Since $V$ has dense
range, $H_2=\bigoplus_{i\geq1} F_i$. It follows that (up to
equivalence of measures) $\sum_{i\geq1}\sigma^{(i)}_2$ is the
maximal spectral type of $U_2$ hence it is equivalent to
$\sum_{i\geq1}\sigma^{(1)}_i$ and therefore
$\sigma^{(1)}_i\equiv\sigma^{(2)}_i$ for $i\geq1$. The same
reasoning applied to the decomposition $H_2=\bigoplus_{i\geq1}F_i$
and $W$ shows that $H_1=\bigoplus_{i\geq1}\ov{W(F_i)}$ and the
maximal spectral type of $U_1$ on $\ov{W(F_i)}$ is absolutely
continuous with respect to $\sigma^{(2)}_i\equiv\sigma_i^{(1)}$,
$i\geq1$. It follows that $\ov{W(F_i)}=H_{\sigma_i^{(1)}}$ for all
$i\geq1$. In particular, we have proved that
$n^{(2)}_i=n^{(1)}_i$ but we need to show that on $F_i$ the
multiplicity is uniform. Suppose this is not the case, i.e. that
for some measure $\eta\ll\sigma^{(2)}_i$ we have
$$
F_i=\Z(z_1)\oplus\ldots\oplus \Z(z_r)\oplus F'_i,$$ where for
$j=1,\ldots r$, $\sigma_{z_j}=\eta$, $1\leq r<n_i^{(2)}$ and the
maximal spectral type of $U_2$ on $F_i'$ is orthogonal to $\eta$.
We have
$$
H_{\sigma_i^{(1)}}=\ov{W(F_i)}=G_i\oplus\ov{W(F'_i)},$$ where
$G_i=\ov{W(\Z(z_1)\oplus\ldots\oplus \Z(z_r))}$ and the maximal
spectral types on $G_i$, say $\tau(\ll\eta)$, and $\ov{W(F'_i)}$
are mutually singular. It follows that the multiplicity of $\tau$
is at most $r$, which is a contradiction since all measures
absolutely continuous with respect to $\sigma^{(1)}_i$ have
multiplicity $n^{(1)}_i$.
\end{proof}

\begin{Remark}\label{similarity}
Literally speaking, the notion of quasi-similarity is  weaker than
the classical notion of quasi-affinity \cite{Fo-Na}: $A_1$ and
$A_2$ are {\em quasi-affine} if there exists a $1-1$ bounded
linear operator $V:H_1\to H_2$ with dense range intertwining $A_1$
and $A_2$. Proposition~3.4 in \cite{Fo-Na} tells us that
quasi-affine unitary operators are unitarily equivalent. Hence
Proposition~\ref{equivalence} shows in fact that for unitary
operators quasi-similarity and  quasi-affinity are equivalent
notions.

Similarly to Markov quasi-similarity of Koopman operators we can
speak about their {\em Markov quasi-affinity}. It is not clear
(see Section~\ref{BB}) whether these two notions coincide.
\end{Remark}

\section{A convolution operator in
\protect$l^2(\Z)$}\label{calkowanie} In this section we produce a
sequence in $l^2(\Z)$ which will be used to construct a Markov
quasi-affinity between two non-weakly isomorphic automorphisms in
Section~\ref{przyklad}.

Denote by $l_0(\Z)$ the subspace of $l^2(\Z)$ of complex sequences
$\bar{x}=(x_n)_{n\in\Z}$ such that $\{n\in\Z:\:x_n\neq 0\}$ is
finite.

\begin{Prop}\label{istciagu}
There exists a nonnegative sequence $\bar{a}=(a_n)_{n\in\Z}\in l^2(\Z)$
such that $\sum_{n\in\Z}a_n=1$ and
\begin{equation}\label{zalpod}
\mbox{ for every }\bar{x}=(x_n)_{n\in\Z}\in l^2(\Z)\mbox{ if
}\bar{a}\ast\bar{x}\in l_0(\Z)\mbox{ then }\bar{x}=\bar{0}.
\end{equation}\end{Prop}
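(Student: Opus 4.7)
The plan is to pass to the Fourier side via the isomorphism $l^2(\Z) \simeq L^2(\T)$ given by $\bar{a} \leftrightarrow f_{\bar{a}}(t) := \sum_n a_n e^{2\pi int}$. Under this correspondence, convolution by $\bar{a}$ becomes multiplication by $f_{\bar{a}}$ on $L^2(\T)$, the subspace $l_0(\Z)$ of finitely supported sequences corresponds to the subspace of trigonometric polynomials, and the hypotheses $a_n \geq 0$, $\sum a_n = 1$ translate to $f := f_{\bar{a}}$ being a continuous positive definite function on $\T$ with $f(0) = 1$. Property~(\ref{zalpod}) then reads: for every $g \in L^2(\T)$, if $f \cdot g$ is a trigonometric polynomial, then $g = 0$ in $L^2(\T)$.

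The key sufficient condition on $f$ is that $f \neq 0$ almost everywhere on $\T$ and that $f$ possesses a zero of infinite order (a \emph{flat zero}) at some point $t_0 \in \T$. Indeed, suppose $f g = p$ with $g \in L^2(\T)$ and $p$ a nonzero trigonometric polynomial. Since $p$ is analytic, its order of vanishing at $t_0$ is finite; therefore $g = p/f$ would have a non-$L^2$ singularity at $t_0$, which is impossible. Hence $p \equiv 0$, and since $f \neq 0$ a.e., $g = 0$.

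To produce such $\bar{a}$, I would exploit the indeterminacy of the Hamburger moment problem on $\Z$. Find two probability measures $\mu_1, \mu_2$ on $\Z$ with identical moments of all orders, where $\mu_1$ is supported on the even integers and $\mu_2$ on the odd integers (equivalently, find probability measures $q, r$ on $\N$ such that $X \sim q$ and $Y + \tfrac{1}{2}$ with $Y \sim r$ have the same moments; such pairs exist for sufficiently slowly decaying distributions, by adapting the classical log-normal indeterminacy example). Put $\nu := \mu_1 - \mu_2$ and define $a_n := \tfrac{1}{2}(-1)^n \nu(n)$. By the disjointness of supports, $a_n = \tfrac{1}{2}\mu_1(n)$ for even $n$ and $a_n = \tfrac{1}{2}\mu_2(n)$ for odd $n$, so $a_n \geq 0$ and $\sum_n a_n = 1$. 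A direct computation gives $f_{\bar{a}}(t) = \tfrac{1}{2}\,\hat{\nu}(t + \tfrac{1}{2})$, and the vanishing of every moment of $\nu$ yields $\hat{\nu}^{(k)}(0) = (2\pi i)^k \sum_n n^k \nu(n) = 0$ for all $k \geq 0$; hence $f_{\bar{a}}$ has a flat zero at $t_0 = \tfrac{1}{2}$.

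The hard part will be ensuring that $f_{\bar{a}}$ is nonzero on a set of full measure. Since $\hat{\nu}$ is only $C^{\infty}$ (not real-analytic), its zero set could a priori have positive Lebesgue measure. This must be ruled out either by a careful choice of the pair $(\mu_1, \mu_2)$, or by convolving the constructed $\bar{a}$ with an auxiliary probability measure whose Fourier transform has no zeros on $\T$, which preserves the flat zero at $t_0 = \tfrac{1}{2}$ while providing nonvanishing elsewhere.
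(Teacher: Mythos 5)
Your reduction to the Fourier side and your key sufficient condition --- a continuous $f$ with nonnegative Fourier coefficients summing to $1$, nonzero a.e., and vanishing to infinite order at one point, so that $p/f\notin L^2(\T)$ for every nonzero trigonometric polynomial $p$ --- are exactly the paper's, and your divergence argument ($|p|\geq c|t-t_0|^m$ near $t_0$ by Taylor, versus $f=O(|t-t_0|^{m+1})$, forcing $\int|p|^2/f^2=\infty$) is the one used there. Where you differ is in how $f$ is produced, and that is where the gaps lie. The paper simply writes $f$ down: $f(x)=e^{2-1/|x-1/2|}$ for $x\neq 1/2$ and $f(1/2)=0$. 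A short lemma (two integrations by parts) shows that any convex $C^2$ function on $[0,1]$ with $f(1-x)=f(x)$ has $\hat f(n)\geq 0$; continuity and bounded variation give $\sum_n\hat f(n)=f(0)=1$; strict positivity off $\{1/2\}$ and the flat zero at $1/2$ are manifest. Your route through a moment-indeterminate pair $(\mu_1,\mu_2)$ supported on the even and odd integers leaves the existence of such a pair --- with all moments finite and equal and with exactly that parity of supports --- asserted rather than proved; discretizing the log-normal indeterminacy example to land precisely on $2\Z$ and $2\Z+1$ is not routine.

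The more serious problem is your fallback for the a.e.-nonvanishing of $f_{\bar a}=\tfrac12\hat\nu(\cdot+\tfrac12)$: convolving $\bar a$ with a kernel $\bar b$ whose Fourier transform never vanishes replaces $f_{\bar a}$ by $f_{\bar a}\cdot f_{\bar b}$, whose zero set is \emph{identical} to that of $f_{\bar a}$ --- multiplying by a nonvanishing function cannot ``provide nonvanishing elsewhere.'' Since $\hat\nu$ is only $C^\infty$, nothing a priori prevents it from vanishing on a set $E$ of positive measure, and if it does, then $g={\mathbf 1}_E$ gives $f\cdot g=0$ with $g\neq 0$, violating~(\ref{zalpod}) (the zero sequence belongs to $l_0(\Z)$). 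So a.e.-nonvanishing must be built into the construction itself, which your sketch does not accomplish. The remedy is to keep your (correct) reduction and divergence argument but replace the moment-problem construction by an explicit convex, symmetric profile vanishing to infinite order at exactly one point, as the paper does.
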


Each element $\ov{y}\in l^2(\Z)$ is an $L^2$-function on $\Z$ and
its Fourier transform is a function $h\in L^2(\T)$ for which
$\widehat{h}(n)=y_n$ for all $n\in\Z$. Moreover, the convolution
of $l^2$-sequences corresponds to the pointwise multiplication of
$L^2$-functions on the circle. It follows that in order to find
the required sequence $\bar{a}$, it suffices to find a function
$f\in L^2(\T)$ such that
\begin{itemize}
\item $a_n=\hat{f}(n)\geq 0$, $\sum_{n\in\Z}a_n=1$;
\item for every $g\in L^2(\T)$, if $f\cdot g=0$ then
$g=0$; \item  for every non-zero trigonometric polynomial $P$, if
$P=f\cdot g$ then  $g\notin L^2(\T)$. \end{itemize} This is done
below.

\begin{Lemma}\label{dodwspol}
If $f:[0,1]\to\R_+$ is a convex $C^2$-function such that
$f(1-x)=f(x)$ for all $x\in[0,1]$ then $\hat{f}(n)\geq 0$ for all
$n\in\Z$.
\end{Lemma}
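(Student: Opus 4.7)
The plan is to compute $\widehat{f}(n) = \int_0^1 f(x) e^{-2\pi i n x}\,dx$ directly by two integrations by parts and then use convexity to bound the resulting integral.

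First, I would dispose of the case $n=0$, where trivially $\widehat{f}(0)=\int_0^1 f\geq 0$ since $f\geq 0$. For $n\neq 0$, I would first observe that the symmetry $f(1-x)=f(x)$ makes $\widehat{f}(n)$ real, and moreover forces the boundary conditions $f(0)=f(1)$ (take $x=0$) and $f'(0)=-f'(1)$ (differentiate and take $x=0$). A subsidiary consequence, which will be crucial at the end, is that $f'(0)\leq 0$: convexity says $f'$ is nondecreasing, so $f'(0)\leq f'(1)=-f'(0)$, giving $2f'(0)\leq 0$.

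Next, I would integrate by parts twice in $\widehat{f}(n)=\int_0^1 f(x)\cos(2\pi n x)\,dx$. The first integration by parts kills the boundary term because $\sin(2\pi n\cdot 0)=\sin(2\pi n\cdot 1)=0$, yielding
\begin{equation*}
\widehat{f}(n)=-\frac{1}{2\pi n}\int_0^1 f'(x)\sin(2\pi n x)\,dx.
\end{equation*}
The second integration by parts produces a boundary term $-(f'(1)-f'(0))/(2\pi n)=2f'(0)/(2\pi n)$ using $f'(1)=-f'(0)$, together with the integral of $f''(x)\cos(2\pi n x)$, giving altogether
\begin{equation*}
\widehat{f}(n)=-\frac{1}{(2\pi n)^2}\Bigl(2f'(0)+\int_0^1 f''(x)\cos(2\pi n x)\,dx\Bigr).
\end{equation*}

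The final step is the sign analysis of the bracket. Since $f$ is convex we have $f''\geq 0$, so
\begin{equation*}
\int_0^1 f''(x)\cos(2\pi n x)\,dx\leq \int_0^1 f''(x)\,dx=f'(1)-f'(0)=-2f'(0).
\end{equation*}
Combining this with $2f'(0)\leq 0$ shows that the bracket is $\leq 2f'(0)+(-2f'(0))=0$, hence $\widehat{f}(n)\geq 0$. I do not anticipate a serious obstacle: the only subtlety is recognizing that the required nonpositivity of $f'(0)$ is forced by convexity plus symmetry alone (positivity of $f$ is not needed here, it only enters through the $n=0$ case).
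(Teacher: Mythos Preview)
Your proof is correct and follows essentially the same route as the paper: reduce to the cosine integral via the symmetry, integrate by parts twice, and then bound $\int_0^1 f''(x)\cos(2\pi nx)\,dx\le\int_0^1 f''(x)\,dx=f'(1)-f'(0)$ using $f''\ge 0$. One small remark: the observation $f'(0)\le 0$ is not actually needed, since the bracket bound $2f'(0)+\int f''\cos\le 2f'(0)+(-2f'(0))=0$ already holds regardless of the sign of $f'(0)$.
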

\begin{proof}
By assumption, $f''(x)\geq 0$ for all $x\in[0,1]$. Using
integration by parts twice, for $n\neq 0$ we obtain
\begin{eqnarray*}
\hat{f}(n)&=&\int_0^1f(x)e^{-2\pi inx}dx=\int_0^1f(x)\cos(2\pi
nx)\,dx=\frac{1}{2\pi n}\int_0^1f(x)\,d\sin(2\pi
nx)\\&=&-\frac{1}{2\pi n}\int_0^1f'(x)\sin(2\pi
nx)\,dx=\frac{1}{4\pi^2 n^2}\int_0^1f'(x)\,d\cos(2\pi
nx)\\
&=&\frac{1}{4\pi^2 n^2}\left[f'(1)-f'(0)-\int_0^1f''(x)\cos(2\pi
nx)\,dx\right]\\
&\geq&\frac{1}{4\pi^2
n^2}\left[f'(1)-f'(0)-\int_0^1|f''(x)\cos(2\pi
nx)|\,dx\right]\\&\geq&\frac{1}{4\pi^2
n^2}\left[f'(1)-f'(0)-\int_0^1f''(x)\,dx\right]=0.
\end{eqnarray*}
\end{proof}

\begin{proof}[Proof of Proposition~\ref{istciagu}]Let us consider $f:[0,1]\to\R$ defined by
\[f(x)=\left\{\begin{array}{ccc}e^{-\frac{1}{|x-1/2|}+2}&\mbox{ if }&x\neq 1/2\\
0&\mbox{ if }&x=1/2.
\end{array}\right.\]
Since $f''(x)\geq 0$ for $x\in[0,1]$, by Lemma~\ref{dodwspol},
$a_n=\hat{f}(n)\geq 0$. As $f:\T\to\R$ is a continuous function of
bounded variation,
\[1=f(0)=\sum_{n\in\Z}a_n.\]

Since $f(x)\neq 0$ for $x\neq12$, if $f\cdot g=0$ for some $g\in
L^2(\T)$ then $g=0$.

Suppose, contrary to our claim, that there exist $g\in L^2(\T)$
and a non-zero trigonometric polynomial $P$ such that $f\cdot
g=P$. Recall that for every $m\geq 0$ we have $\int_0^1
e^{1/x}x^m\,dx=+\infty$, hence $\int_0^1
(e^{1/x}x^m)^2\,dx=+\infty$. Since $P$ is a non-zero analytic
function, there exists $m\geq 0$ such that $P^{(m)}(1/2)\neq 0$
and $P^{(k)}(1/2)= 0$ for $0\leq k<m$. By Taylor's formula, there
exist $C>0$ and $0<\delta<1/2$ such that $|P(x+1/2)|\geq C|x|^m$
for $x\in[-\delta,\delta]$. It follows that
\begin{eqnarray*}\int_\T|g(x)|^2\,dx&\geq &
\int_{1/2}^{1/2+\delta}|P(x)|^2/f(x)^2\,dx=\int_0^{\delta}|P(x+1/2)|^2/f(x+1/2)^2\,dx\\
&\geq&\int_0^\delta(Cx^me^{1/x})^2\,dx=+\infty, \end{eqnarray*}
and hence $g\notin L^2(\T)$ which completes the proof.
\end{proof}

\section{Two non-weakly isomorphic automorphisms which are Markov
quasi-similar}\label{przyklad}

Let $T$ be an ergodic automorphism of $\xbm$. Assume that $G$ is a
compact metric Abelian group with Haar measure $\la_G$. A
measurable function $\va:X\to G$ is called a {\em cocycle}. Using
the cocycle we can define a {\em group extension} $T_{\va}$ of $T$
which acts on $(X\times G,\cb\ot\cb(G),\mu\ot\la_G)$ by the
formula $T_{\va}(x,g)=(Tx,\va(x)+g)$.

We will first take $\va:X\to\Z_2:=\{0,1\}$ so that the group
extension $T_{\va}$ is ergodic. Then assume that we can find $S$
acting on $\xbm$, $ST=TS$, such that if we put $G=\Z_2^{\Z}$ and
define
$$
\psi:X\to G
,\;\;\psi(x)=(\ldots,\varphi(S^{-1}x),\stackrel{0}{\varphi(x)},\varphi(Sx),\varphi(S^2x),\ldots)$$
then $T_\psi$ is ergodic as well (see \cite{Le} for concrete
examples of $T$, $\va$ and $S$ fulfilling our requirements). Put
now $T_1=T_\psi$ and let us take a factor $T_2$ of $T_1$ obtained
by ``forgeting'' the first $\Z_2$-coordinate. In other words on
$(X\times\Z_2^\Z,\mu\ot\la_{\Z_2^{\Z}})$ we consider two
automorphisms
\[T_1(x,\underline{i})=(Tx,\ldots,i_{-1}+\varphi(S^{-1}x),\stackrel{0}
{i_0+\varphi(x)},i_1+\varphi(Sx),i_2+\varphi(S^2x),\ldots),\]
\[T_2(x,\underline{i})=(Tx,\ldots,i_{-1}+\varphi(S^{-1}x),\stackrel{0}
{i_0+\varphi(x)},i_1+\varphi(S^2x),i_2+\varphi(S^3x),\ldots),\]
where
$\underline{i}=(\ldots,i_{-1},\stackrel{0}{i_0},i_1,i_2,\ldots)$.
Define $I_n:X\times\Z_2^\Z\to X\times\Z_2^\Z$ by putting
\[I_n(x,\underline{i})=(S^nx,\ldots,i_{n-1},\stackrel{0}{i_n},i_{n+2},i_{n+3},\ldots).\]
Then $I_n$ is measure-preserving and $I_n\circ T_1=T_2\circ I_n$.
Therefore \begin{equation}\label{aaaa}U_{T_1}\circ
U_{I_n}=U_{I_n}\circ U_{T_2}\end{equation} with $U_{I_n}$ being an
isometry (which is not onto) and
\begin{eqnarray*}\lefteqn{U^*_{I_n}F(x,\underline{i})}\\&=&
\frac{1}{2}\left(F(S^{-n}x,\ldots,\stackrel{0}{i_{-n}},\ldots,\stackrel{n}{i_0},0,i_{1},\ldots)
+F(S^{-n}x,\ldots,\stackrel{0}{i_{-n}},\ldots,\stackrel{n}{i_0},1,i_{1},\ldots)\right).
\end{eqnarray*}

Let $\bar{a}=(a_n)_{n\in\Z}\in l^2(\Z)$ be a nonnegative sequence
such that $\sum_{n\in\Z}a_n=1$ and~(\ref{zalpod}) holds. Let
$J:L^2(X\times\Z_2^\Z,\mu\ot\la_{\Z_2^{\Z}})\to
L^2(X\times\Z_2^\Z,\mu\ot\la_{\Z_2^{\Z}})$ stand for the Markov
operator defined by
\[J=\sum_{n\in\Z}a_nU_{I_n}.\]
In view of~(\ref{aaaa}), $J$ intertwines $U_{T_1}$ and $U_{T_2}$.

Denote by $Fin$ the set of finite nonempty subsets of $\Z$. Let us
consider two operations on $Fin$:
\[\widehat{A}=\{s\in A:\:s\leq 0\}\cup\{s+1:\:s\in A,s>0\}\text{ for }A\in Fin;\]
\[\widetilde{B}=\{s\in B:\:s\leq 0\}\cup\{s-1:\:s\in B,s>1\}\text{ for }B\in Fin\text{ with }1\notin B.\]
Of course, $\widetilde{\widehat{A}}=A$ and
$\widehat{\widetilde{B}}=B$. Let  $\sim$ stand for the equivalence
relation in $Fin$ defined by $A\sim B$ if $A=B+n$ for some
$n\in\Z$. Denote by $Fin_0$ a fundamental domain for this
relation.

\begin{Lemma}
$J$ has trivial kernel.
\end{Lemma}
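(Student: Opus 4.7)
The plan is to Fourier–decompose each $F\in L^2(X\times\Z_2^\Z,\mu\otimes\lambda_{\Z_2^\Z})$ along the characters of $\Z_2^\Z$, which are indexed by finite subsets $A\subset\Z$ (including $A=\emptyset$) via $\chi_A(\underline i)=\prod_{s\in A}(-1)^{i_s}$. Writing $F=\sum_A f_A\otimes\chi_A$ with $f_A\in L^2(X,\mu)$, one has $\sum_A\|f_A\|^2=\|F\|^2<\infty$. A direct computation from the definition of $I_n$ gives
$$U_{I_n}(f_A\otimes\chi_A)=(U_S^n f_A)\otimes\chi_{n+\widehat{A}}\qquad\text{for }A\in Fin,$$
and $U_{I_n}(f_\emptyset\otimes\chi_\emptyset)=(U_S^n f_\emptyset)\otimes\chi_\emptyset$. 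Hence $J$ respects the orthogonal splitting of $L^2(X\times\Z_2^\Z)$ into the constant–fiber piece $L^2(X,\mu)\otimes\C\un$ and its orthocomplement, and it suffices to prove injectivity of $J$ on each summand.

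On the orthocomplement, assume $JF=0$ and fix $B\in Fin$. Since the equation $n+\widehat{A}=B$ has the unique solution $A=\widetilde{B-n}$ precisely when $n+1\notin B$, collecting the $\chi_B$-coefficient gives $\sum_{n:\,n+1\notin B}a_n\,U_S^n f_{\widetilde{B-n}}=0$. Grouping these identities by the $\sim$-classes — writing $B=B_0+m$ with $B_0\in Fin_0$, $m\in\Z$, and extracting $U_S^m$ — rewrites them as
$$\sum_{k:\,k+1\notin B_0}a_{k+m}\,U_S^k f_{\widetilde{B_0-k}}=0\qquad\text{for every }m\in\Z.$$
Pair with an arbitrary $\phi\in L^2(X,\mu)$ and set $\beta_k=\langle U_S^k f_{\widetilde{B_0-k}},\phi\rangle$ (extended by zero when $k+1\in B_0$). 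Because the shifts $B_0-k$ are pairwise distinct and $\widetilde{\,\cdot\,}$ is injective, Cauchy–Schwarz yields $\sum_k|\beta_k|^2\le\|\phi\|^2\sum_A\|f_A\|^2<\infty$, so $\bar\beta\in l^2(\Z)$. The identities above say that the convolution of $\bar a$ with a reflection of $\bar\beta$ is the zero sequence, hence lies in $l_0(\Z)$, and Proposition~\ref{istciagu} forces $\bar\beta=0$. Since $\phi$ was arbitrary, $f_{\widetilde{B_0-k}}=0$ for every admissible $k$. As $\widehat{\,\cdot\,}$ and $\widetilde{\,\cdot\,}$ are mutually inverse bijections, the pairs $(B_0,k)$ with $B_0\in Fin_0$ and $k+1\notin B_0$ exhaust $Fin$ exactly once, so $f_A=0$ for every $A\in Fin$.

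For the constant–fiber component, $JF=0$ reduces to $\sum_n a_n U_S^n f_\emptyset=0$, equivalently $h(U_S)f_\emptyset=0$ where $h(e^{2\pi it})=f(t)$ is the $L^2(\T)$-function built in Section~\ref{calkowanie}. Since $f$ vanishes only at $t=1/2$, the spectral theorem forces the spectral measure of $f_\emptyset$ with respect to $U_S$ to be concentrated at $\{1/2\}$, so $f_\emptyset$ must be a $(-1)$-eigenvector of $U_S$; the construction of \cite{Le} is arranged so that $S$ admits no such eigenvalue, and $f_\emptyset=0$ follows. The main technical obstacle I foresee is the bookkeeping in the middle step: verifying that the map $k\mapsto\widetilde{B_0-k}$ is injective into $Fin$ (to secure the $l^2$-bound on $\bar\beta$) and that jointly the pairs $(B_0,k)$ cover all of $Fin$ so that no Fourier coefficient $f_A$ escapes the argument.
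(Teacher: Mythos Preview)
Your argument on the orthocomplement is essentially the paper's proof: both Fourier--decompose along characters $\chi_A$, compute $U_{I_n}(f_A\otimes\chi_A)=(U_S^n f_A)\otimes\chi_{n+\widehat A}$, group by the translation classes $Fin_0$, and reduce $JF=0$ to the convolution condition of Proposition~\ref{istciagu}. The only technical difference is that the paper works pointwise --- it forms the sequence $\xi^B(x)=(f_{\widetilde{B-n}}(S^nx))_n$, checks $\xi^B(x)\in l^2(\Z)$ for $\mu$-a.e.\ $x$, and applies~(\ref{zalpod}) directly --- whereas you pair with a test function $\phi$ to obtain a scalar $l^2$--sequence. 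Both routes are valid and equally short; the paper's pointwise version avoids the auxiliary $\phi$ but needs the a.e.\ $l^2$--claim, while your weak version trades that for Cauchy--Schwarz.

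You go beyond the paper in one respect: you separately treat the component $L^2(X,\mu)\otimes\C\mathbf 1$ (the character $\chi_\emptyset$), which the paper's expansion $F=\sum_{A\in Fin}f_A\,\chi_A$ over \emph{nonempty} $A$ tacitly omits. Your spectral argument there is correct up to the assertion that $-1$ is not an eigenvalue of $U_S$; that is a property of the specific $S$ borrowed from \cite{Le} and is not stated in the present paper, so strictly speaking it should be checked against the construction rather than assumed. Apart from this external input, your proof is complete and matches the paper's strategy.
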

\begin{proof}
Each $F\in L^2(X\times\Z_2^\Z,\mu\ot\la_{\Z_2^{\Z}})$ can be
written as
\[F(x,\underline{i})=\sum_{A\in Fin}f_A(x)(-1)^{A(\underline{i})},\mbox{ where }A(\underline{i})=\sum_{s\in A}i_s.\]
Note that $\sum_{A\in Fin}
\|f_A\|_{L^2(X,\mu)}^2=\|F\|^2_{L^2(X\times
\Z_2^{\Z},\mu\ot\la_{\Z_2^{\Z}})}$. Since \[U_{I_n}\left(f_A\ot
(-1)^{A(\cdot)}\right)
(x,\underline{i})=\left(f_A\ot(-1)^{A(\cdot)}\right)(I_n(x,\underline{i}))=f_A(S^nx)(-1)^{(\widehat{A}+n)(\underline{i})},\]
we have
\[JF(x,\underline{i})=\sum_{n\in\Z}\sum_{A\in Fin}a_n f_A(S^nx)(-1)^{(\widehat{A}+n)(\underline{i})}.\]
Notice that $n+1\notin \widehat{A}+n$. To reverse the roles played
by $A$ and $\widehat{A}+n$ note that if $B\in Fin$ and $n+1\notin
B$ then the set $\widetilde{B-n}$ is the unique set such that
$\widehat{\widetilde{B-n}}+n=B$. It follows that
\[JF(x,\underline{i})=\sum_{B\in Fin}\sum_{n\in\Z,n+1\notin B}a_nf_{\widetilde{B-n}}(S^nx)(-1)^{B(\underline{i})}=
\sum_{B\in Fin}\widetilde{F}_B(x)(-1)^{B(\underline{i})},\] where
$\widetilde{F}_B(x)=\sum_{n\in\Z,n+1\notin
B}a_nf_{\widetilde{B-n}}(S^nx)$.  For every $B\in Fin_0$ and $x\in
X$ we  define $\xi^B(x)=(\xi^B_n(x))_{n\in\Z}$ by setting
$$\xi_{-n}^B(x)=\left\{\begin{array}{ccl}f_{\widetilde{B-n}}(S^nx)&\text{ if }&n+1\notin
B\\
0&\text{ if }&n+1\in B.\end{array}\right.$$ Therefore, for
$k\in\Z$
\begin{eqnarray*}\widetilde{F}_{B+k}(x)&=&\sum_{n\in\Z,n+1\notin
B+k}a_nf_{\widetilde{B-n+k}}(S^nx)\\&=&\sum_{n\in\Z,(n-k)+1\notin
B}a_nf_{\widetilde{B-(n-k)}}(S^{-(k-n)}(S^kx))\\&=&\sum_{n\in\Z}a_n\xi^B_{k-n}(S^kx)=[\bar{a}
\ast\left(\xi^B(S^kx)\right)]_k.\end{eqnarray*}

Suppose that $J(F)=0$. It follows that given $k\in\Z$ and $B\in
Fin_0$ we have
$[\bar{a}\ast\left(\xi^B(S^kx)\right)]_k=\widetilde{F}_{B+k}(x)=0$
for $\mu$-a.e.\ $x\in X$, whence a.s.\ we also have
$[\bar{a}\ast\left(\xi^B(x)\right)]_k=0$. Letting $k$ run through
$\Z$ we obtain that  $\bar{a}\ast\left(\xi^B(x)\right)=\bar{0}$
for $\mu$-a.e.\ $x\in X$. On the other hand $\xi^B(x)\in l^2(\Z)$
for almost every $x\in X$. In view of (\ref{zalpod}),
$\xi^B(x)=\bar{0}$ for every $B\in Fin_0$ and for a.e.\ $x\in X$,
hence $f_{\widetilde{A}}=0$ for every $A\in Fin$ with $1\notin A$.
It follows that $f_{{A}}=0$ for every $A\in Fin$, consequently
$F=0$.
\end{proof}
\begin{Lemma}
$J^*$ has  trivial kernel.
\end{Lemma}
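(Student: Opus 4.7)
The plan is to mirror the combinatorial scheme used for the preceding lemma on the adjoint side. First I would compute the action of $U_{I_n}^*$ in the Fourier basis $\{(-1)^{A(\cdot)}\}_{A\in Fin}$ of $\Z_2^\Z$. Since $U_{I_n}$ is an isometry whose image consists of functions not depending on the coordinate $i_{n+1}$, its adjoint $U_{I_n}^*$ first applies $S^{-n}$ in the $X$-coordinate and then averages the character $(-1)^{A(\cdot)}$ over that free coordinate; this average vanishes whenever $n+1\in A$, and otherwise the surviving character is precisely $(-1)^{\widetilde{A-n}(\underline{i})}$ in the notation of the previous section. Concretely,
\[U_{I_n}^*\bigl(f_A\otimes(-1)^{A(\cdot)}\bigr)(x,\underline{i})=\begin{cases}f_A(S^{-n}x)\,(-1)^{\widetilde{A-n}(\underline{i})},&n+1\notin A,\\ 0,&n+1\in A,\end{cases}\]
and re-indexing the sum over $A$ by $B=\widetilde{A-n}$ (equivalently $A=\widehat{B}+n$, for which $n+1\notin A$ is automatic since $1\notin\widehat{B}$) gives
\[J^*F(x,\underline{i})=\sum_{B\in Fin}\Bigl(\sum_{n\in\Z}a_n f_{\widehat{B}+n}(S^{-n}x)\Bigr)(-1)^{B(\underline{i})}.\]

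Assuming $J^*F=0$, every inner bracket vanishes a.e. I would then fix a non-empty $A_0\in Fin_0$ and collect the equations coming from those $B$'s whose $\widehat{B}$ lies in the shift orbit of $A_0$: these are precisely $B=\widetilde{A_0+m}$ for $m\in\Z\setminus(1-A_0)$, a cofinite set. Setting $\xi^{A_0}_j(y):=f_{A_0+j}(S^{-j}y)$ and substituting $j=m+n$, $y=S^m x$, each such equation becomes $\sum_j a_{j-m}\xi^{A_0}_j(y)=0$ a.e. Since our sequence $\bar{a}$ is symmetric (the function $f$ from Section~\ref{calkowanie} is even about $1/2$, so $\widehat{f}(n)=\widehat{f}(-n)$), this is the same as $(\bar{a}\ast\xi^{A_0}(y))_m=0$.

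The equation therefore holds for every $m$ outside the finite set $1-A_0$, so $\bar{a}\ast\xi^{A_0}(y)\in l_0(\Z)$. Because $\int_X\|\xi^{A_0}(y)\|_{l^2}^2\,d\mu\le\|F\|^2<\infty$, the sequence $\xi^{A_0}(y)$ lies in $l^2(\Z)$ for a.e.\ $y$, so Proposition~\ref{istciagu} forces $\xi^{A_0}\equiv\bar{0}$, i.e.\ $f_{A_0+j}=0$ for every $j\in\Z$. Letting $A_0$ run through $Fin_0$ kills every non-empty Fourier coefficient of $F$.

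The only obstacle I expect to be worth flagging is the degenerate orbit $A=\emptyset$, for which the scheme above collapses to the single scalar equation $\sum_n a_n f_\emptyset(S^{-n}x)=0$, i.e.\ $f(U_S)f_\emptyset=0$, where $f(e^{2\pi i\theta})=\sum_n a_n e^{2\pi in\theta}$ has its only zero at $\theta=1/2$. This forces $f_\emptyset=0$ provided $-1$ does not belong to the point spectrum of $U_S$, a property which holds for the $S$ used here (and which could in any case be arranged by placing the lone zero of $f$ in Proposition~\ref{istciagu} off the countable point spectrum of $U_S$). Thus every Fourier coefficient of $F$ vanishes, $F=0$, and $\ker J^*=\{0\}$.
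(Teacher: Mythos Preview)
Your argument is correct and follows the same scheme as the paper: compute $U_{I_n}^*$ on the characters $(-1)^{A(\cdot)}$, regroup by the output character, and for each $Fin_0$-orbit reduce the vanishing of the corresponding coefficients to an equation $\bar a*\zeta\in l_0(\Z)$ with $\zeta\in l^2(\Z)$, whence $\zeta=\bar 0$ by Proposition~\ref{istciagu}. The only differences are cosmetic. The paper sets $\zeta^A_l(x)=f_{A-l}(S^lx)$, which is the reflection of your $\xi^{A_0}$, so that the convolution appears with the standard sign and no appeal to the symmetry $a_n=a_{-n}$ is needed (though your use of that symmetry is of course valid for the specific $f$ of Section~\ref{calkowanie}). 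Also, the paper writes the Fourier expansion only over nonempty $A$ (its $Fin$ excludes $\emptyset$) and therefore never discusses the constant-in-$\underline{i}$ component; your treatment of the degenerate orbit via $f(U_S)f_\emptyset=0$ is correct and in fact makes the argument more complete on this point.
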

\begin{proof}
Let
\[F(x,\underline{i})=\sum_{A\in Fin}f_A(x)(-1)^{A(\underline{i})}.\]
Then
\[U^*_{I_n}\left(f_A\ot (-1)^{A(\cdot)}\right)(x,\underline{i})=
\left\{\begin{array}{ccc}f_A(S^{-n}x)(-1)^{\widetilde{A-n}(\underline{i})}&\mbox{ if }&n+1\notin A\\
0&\mbox{ if }&n+1\in A. \end{array}\right. \] It follows that
\begin{eqnarray*}J^*F(x,\underline{i})&=&\sum_{A\in Fin}\sum_{n\in\Z,n+1\notin
A}a_nf_A(S^{-n}x)(-1)^{\widetilde{A-n}(\underline{i})}\\
&=&\sum_{B\in
Fin}\sum_{n\in\Z}a_nf_{\widehat{B}+n}(S^{-n}x)(-1)^{B(\underline{i})}\\
&=&\sum_{A\in Fin,1\notin A
}\sum_{n\in\Z}a_nf_{A+n}(S^{-n}x)(-1)^{\widetilde{A}(\underline{i})}.
\end{eqnarray*}
Furthermore,
\begin{eqnarray*}
J^*F(x,\underline{i})&=&\sum_{A\in Fin_0}\sum_{k\in\Z,1\notin
A-k}\sum_{n\in\Z}a_nf_{A+n-k}(S^{-n}x)(-1)^{\widetilde{A-k}(\underline{i})}\\
&=&\sum_{A\in Fin_0}\sum_{k\in\Z,1\notin
A-k}[\bar{a}\ast\left(\zeta^A(S^{-k}x)\right)]_k(-1)^{\widetilde{A-k}(\underline{i})},\end{eqnarray*}
where $\zeta^A(x)=(\zeta^A(x)_l)_{l\in\Z}$ is given by
$\zeta^A(x)_l=f_{A-l}(S^{l}x)$.

Suppose that $J^*(F)=0$. It follows that
$[\bar{a}\ast\zeta^A(S^{-k}x)]_k=0$ for every $A\in Fin_0$,
$k+1\notin A$ and for a.e.\ $x\in X$. Hence
$\bar{a}\ast\left(\zeta^A(x)\right)\in l_0(\Z)$ for $\mu$-a.e.\
$x\in X$ (the only possibly non-zero terms of the convolved
sequence have indices belonging to $A-1$). Since $\zeta^A(x)\in
l^2(\Z)$, in view of (\ref{zalpod}), $\zeta^A(x)=\ov{0}$ for every
$A\in Fin_0$ and for $\mu$-a.e.\ $x\in X$. Thus $f_A=0$ for all
$A\in Fin$ and consequently $F=0$.
\end{proof}

It follows from the above two lemmas that the ranges of $J$ and
$J^\ast$ are dense. Clearly $J$ and $J^\ast$  intertwine the
Koopman operators $U_{T_1}$ and $U_{T_2}$, hence we have proved
the following.

\begin{Prop} Under the above notation the automorphisms $T_1$ and
$T_2$ are Markov quasi-similar.\bez
\end{Prop}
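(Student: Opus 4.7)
The plan is to collect the preceding ingredients into the definition of Markov quasi-similarity. First I would verify that $J=\sum_{n\in\Z}a_nU_{I_n}$ is itself a Markov operator. Each $I_n$ is measure-preserving, so each $U_{I_n}$ is an isometric Markov operator: it preserves non-negative functions, and both $U_{I_n}\jed=\jed$ and $U_{I_n}^*\jed=\jed$ (the latter is visible from the explicit averaging formula for $U_{I_n}^*$ displayed above). Since the coefficients $a_n$ are non-negative and sum to $1$, the series for $J$ converges in the strong operator topology, $J$ preserves the positive cone, and $J\jed=\jed$, $J^*\jed=\jed$; hence both $J$ and $J^*$ are Markov operators.

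Second, the intertwining~(\ref{aaaa}), multiplied by $a_n$ and summed over $n$, gives $U_{T_1}\circ J=J\circ U_{T_2}$, and taking adjoints yields $J^*\circ U_{T_1}=U_{T_2}\circ J^*$. Thus $J$ and $J^*$ are Markov operators intertwining the Koopman operators $U_{T_1}$ and $U_{T_2}$ in the two opposite directions required by the definition of Markov quasi-similarity.

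Third, the two preceding lemmas assert $\ker J=\{0\}$ and $\ker J^*=\{0\}$. Combined with the elementary identities $\overline{\mathrm{Im}(J)}=(\ker J^*)^\perp$ and $\overline{\mathrm{Im}(J^*)}=(\ker J)^\perp$, this forces both $J$ and $J^*$ to have dense range, which completes all the clauses in the definition of Markov quasi-similarity between $T_1$ and $T_2$.

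All of the real work has been done upstream: Proposition~\ref{istciagu} produces the sequence $\bar{a}$ with the crucial convolution property~(\ref{zalpod}), and the two lemmas translate that property into triviality of $\ker J$ and $\ker J^*$ via the Fourier/parity expansion of elements of $L^2(X\times\Z_2^\Z,\mu\ot\la_{\Z_2^\Z})$. The proposition itself is therefore a pure bookkeeping step, and I expect no genuine obstacle to remain at this stage.
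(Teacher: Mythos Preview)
Your argument is correct and follows exactly the paper's approach: the paper simply notes that the two preceding lemmas give trivial kernels for $J$ and $J^\ast$, hence dense ranges, and that $J$, $J^\ast$ intertwine $U_{T_1}$ and $U_{T_2}$. You have merely spelled out in more detail why $J$ is Markov and how the kernel--range duality yields density, which the paper leaves implicit.
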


Recall that in \cite{Le}  constructions of the above type have
been used to produce weakly isomorphic transformations that are
not isomorphic. In fact our transformation $T_1$ is the same as
the transformation $T_{\ldots,-1,0,1,2,\ldots}$ in Subsection~4.2
in \cite{Le}, where it is proved that each metric endomorphism
that commutes with $T_1$ is invertible. It follows that $T_1$
cannot be a factor of the system given by its {\bf proper} factor;
in particular, it is not weakly isomorphic to $T_2$. In other
words we have proved the following.

\begin{Prop}\label{vershik-answer}
There are ergodic automorphisms which are Markov
quasi-similar  but they are not weakly isomorphic.\bez\end{Prop}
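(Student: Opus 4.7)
The plan is to combine the Markov quasi-similarity already established in the preceding proposition with a rigidity property of $T_1$ from \cite{Le} in order to preclude weak isomorphism. Since the Markov side has just been verified through $J$ and $J^\ast$, all that remains is to show that $T_1$ and $T_2$ are not weakly isomorphic.

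First I would observe that $T_2$ is a factor of $T_1$ by construction: the map that ``forgets'' the first $\Z_2$-coordinate (equivalently, projecting away the coordinate at position $1$ in the $\Z_2^\Z$-factor, after suitably relabelling) is a genuine measure-theoretic factor map from $T_1$ onto $T_2$, and this factor is \emph{proper}, i.e.\ not an isomorphism, since the two systems differ by a nontrivial collapse of coordinates. Hence one direction of a potential weak isomorphism is automatic; the issue is whether the other direction can hold.

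Next I would argue by contradiction. Suppose $T_1$ were weakly isomorphic to $T_2$; then in particular $T_1$ would be a factor of $T_2$, via some factor map $\pi\colon T_2\to T_1$. Composing with the projection $p\colon T_1\to T_2$ from the first paragraph, one obtains a metric endomorphism $\pi\circ p\colon T_1\to T_1$ commuting with $T_1$. Because $p$ is a proper factor map, $\pi\circ p$ cannot be invertible (it factors through a strictly smaller $\sigma$-algebra). This is precisely where the result recalled from \cite{Le} enters: the specific construction of $T_1 = T_{\ldots,-1,0,1,2,\ldots}$ has the property that every metric endomorphism commuting with $T_1$ is invertible. The existence of the non-invertible endomorphism $\pi\circ p$ therefore yields a contradiction, so $T_1$ cannot be a factor of $T_2$, and the two systems are not weakly isomorphic.

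The main obstacle is of course not in this short deduction but in the input from \cite{Le}, namely the rigidity statement that every endomorphism of $T_1$ is invertible; once that is taken as given, the argument reduces to the two-line factor-composition trick above, combined with the preceding proposition to conclude Markov quasi-similarity. It is worth noting that the same scheme shows more: any pair $(T_1,T_2)$ obtained from a construction as in Section \ref{przyklad}, for which $T_1$ has the ``all-endomorphisms-invertible'' property and $T_2$ is realized as a proper factor of $T_1$, automatically answers Vershik's modified question \eqref{qvershik}.
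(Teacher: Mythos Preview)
Your proposal is correct and follows exactly the approach the paper takes: the paragraph preceding the proposition invokes the result from \cite{Le} that every metric endomorphism commuting with $T_1=T_{\ldots,-1,0,1,2,\ldots}$ is invertible, and deduces that $T_1$ cannot be a factor of its proper factor $T_2$. Your write-up simply makes explicit the composition step $\pi\circ p$ that underlies the paper's phrase ``It follows that $T_1$ cannot be a factor of the system given by its proper factor''.
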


\begin{Remark}The Markov quasi-similarity between $T_1$ and $T_2$ constructed above
is given by a $1-1$ Markov operator with dense range, that is, in
fact we have shown that $U_{T_1}$ and $U_{T_2}$ are Markov
quasi-affine. The Markov operator is given as a convex combination
of isometries which separately have no dense ranges as they are
not onto (and obviously their ranges are closed). Let us emphasize
that not each non-trivial choice of weights $(a_n)$ gives rise to
an operator with dense range as the following example shows.
\end{Remark}

\begin{Example}
Set $a_n=\frac1{2^{n+1}}$ for $n\geq 0$ and $a_n=0$ for $n<0$. We
will show that in this case $ker\, J^\ast\neq\{0\}$. Denoting by
$\overline{S}$ the automorphism of
$(X\times\Z_2^\Z,\mu\ot\la_{\Z_2^{\Z}})$ given by
\[\overline{S}(x,\underline{i})=(Sx,\ldots,i_{-1},
i_0,\stackrel{0}{i_1},i_2,\ldots),\] we have $I_n=I_0\circ
\overline{S}^n$ for any $n\in\Z$, and hence
\[J^*=U^{\ast}_{I_0}\circ\sum_{n=0}^{\infty}\frac{1}{2^{n+1}}U_{\overline{S}^{-n}}.\] In fact, we will prove that
\begin{equation}\label{o7}
\left(-\frac12U_{\overline{S}^{-1}}+Id\right)\left(ker\,U^\ast_{I_0}\right)\subset
ker\,J^\ast.
\end{equation}
Notice that if $0\neq G\in
L^2(X\times\Z_2^\Z,\mu\ot\la_{\Z_2}^\Z)$ then $-\frac12G\circ
\ov{S}^{-1}+G\neq 0$ because the norms of the two summands are
different. To prove (\ref{o7}) take $G\in ker\, U_{I_0}^\ast$ and
let $F=-\frac12 G\circ \ov{S}^{-1}+G$. Thus
\begin{eqnarray*}
J^*F&=&U^{\ast}_{I_0}\left(\sum_{n=0}^{\infty}\frac{1}{2^{n+1}}F\circ
\overline{S}^{-n}\right)\\&=&U^{\ast}_{I_0}\sum_{n=0}^{\infty}\left(\frac{1}{2^{n+1}}G\circ
\overline{S}^{-n}-\frac{1}{2^{n+2}}G\circ
\overline{S}^{-n-1}\right)
=U^{\ast}_{I_0}\left(\frac{1}{2}G\right)=0.
\end{eqnarray*}
Since $ker\,U^\ast_{I_0}$ is not trivial, the claim follows.
\end{Example}

\section{Metric invariants of Markov quasi-similarity}\label{AA}
By Proposition~\ref{equivalence} the Markov quasi-similarity is
stronger than spectral equivalence of Koopman representations (it
will be clear from the results of this section that it is
essentially stronger). In particular all spectral invariants like
ergodicity, weak mixing, mild mixing, mixing and rigidity are
invariants for Markov quasi-similarity. It also follows that each
transformation which is spectrally determined, that is for which
spectral equivalence is the same as measure-theoretical
equivalence, is also Markov quasi-equivalence unique (up to
measure-theoretic isomorphism). In particular each automorphism
Markov quasi-similar to an ergodic transformation with discrete
spectrum is isomorphic to it. The same holds for
Gaussian-Kronecker systems (see \cite{Fo-St}).

This spectral flavor is still persistent when we consider Markov
quasi-images. Indeed,  each Markov operator between $L^2$-spaces
``preserves'' the subspace of zero mean functions, therefore a
direct consequence of~(\ref{q3}) is that a transformation which is
a Markov quasi-image of an ergodic (weakly mixing, mixing) system
remains ergodic (weakly mixing, mixing). Despite all this, Markov
quasi-similarity is far from being spectral equivalence. In order
to justify this statement, we need a non-disjointness result from
\cite{Le-Pa-Th} (in fact its proof) which we now briefly recall.

Assume that $T_i$ is an ergodic automorphism of
$(X_i,\cb_i,\mu_i)$, $i=1,2$ and let $\Phi:L^2(X_1,\cb_1,\mu_1)\to
L^2(X_2,\cb_2,\mu_2)$ be a Markov operator intertwining $U_{T_1}$
and $U_{T_2}$. Then $\Phi$ sends $L^\infty$-functions to
$L^\infty$-functions and we can consider $H_\Phi$, the $L^2$-span
of
$$
\{\Phi(f_1^{(1)})\cdot\ldots\cdot \Phi(f^{(1)}_m):\: f^{(1)}_i\in
L^\infty(X_1,\cb_1,\mu_1),\;i=1,\ldots,m,\;m\geq1\}.$$ It turns
out that $H_\Phi=L^2(\ca_\Phi)$ where $\ca_\Phi\subset\cb_2$ is a
$T_2$-invariant $\sigma$-algebra (in other words $\Phi$ defines a
factor of $T_2$). Then by the proof of the main non-disjointness
result (Theorem~4) in \cite{Le-Pa-Th}  this factor is also a
factor of an (ergodic) infinite self-joining of $T_1$. If we
assume additionally that Im$\,\Phi$ is dense then
$H_\Phi=L^2(X_2,\cb_2,\mu_2)$ and the factor given by $\ca_\Phi$
is equal to $T_2$ itself.
\begin{Prop}\label{niezm}
If  $T_2$ is a Markov quasi-image of $T_1$ then $T_2$ is a factor
of some infinite ergodic self-joinings of $T_1$.\bez
\end{Prop}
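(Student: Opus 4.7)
The plan is to combine the setup recalled in the paragraph immediately preceding the proposition with the main non-disjointness result from \cite{Le-Pa-Th}. First, from the hypothesis I fix a Markov operator $\Phi:L^2(X_1,\cb_1,\mu_1)\to L^2(X_2,\cb_2,\mu_2)$ with dense range that intertwines $U_{T_1}$ and $U_{T_2}$. Forming the subspace $H_\Phi$ as above, the discussion just before the proposition guarantees that $H_\Phi=L^2(\ca_\Phi)$ for a $T_2$-invariant sub-$\sigma$-algebra $\ca_\Phi\subset\cb_2$, and that the resulting factor $(T_2,\ca_\Phi)$ is already a factor of some ergodic infinite self-joining of $T_1$ — this is the content of the proof (not merely the statement) of Theorem~4 in \cite{Le-Pa-Th}.

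The only additional step is to verify that $\ca_\Phi=\cb_2$, i.e.\ that $H_\Phi=L^2(X_2,\cb_2,\mu_2)$. For this I will just look at the $m=1$ case in the definition of $H_\Phi$, which gives the inclusion $H_\Phi\supseteq\ov{\Phi(L^\infty(X_1,\cb_1,\mu_1))}$. Since $\mu_1$ is a probability, $L^\infty(X_1,\cb_1,\mu_1)$ is dense in $L^2(X_1,\cb_1,\mu_1)$, so by continuity of $\Phi$ the image $\Phi(L^\infty(X_1,\cb_1,\mu_1))$ is dense in $\mathrm{Im}\,\Phi$. Combined with the hypothesis that $\mathrm{Im}\,\Phi$ is dense in $L^2(X_2,\cb_2,\mu_2)$, this closes the chain and yields $H_\Phi=L^2(X_2,\cb_2,\mu_2)$, hence $\ca_\Phi=\cb_2$.

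I do not anticipate a genuine obstacle here, since all the hard work is encapsulated in \cite{Le-Pa-Th}: the existence of an ergodic infinite self-joining whose factor realizes $\ca_\Phi$ is exactly what the proof of Theorem~4 there produces from the intertwining Markov operator $\Phi$. The role of the density of $\mathrm{Im}\,\Phi$ is only to promote ``a factor of $T_2$'' to $T_2$ itself. The single point requiring a bit of care will be to cite the construction inside the proof of that theorem rather than the disjointness statement as formulated, since the published statement is phrased in the language of disjointness rather than in terms of realizing factors as factors of joinings.
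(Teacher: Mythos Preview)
Your proposal is correct and matches the paper's own argument essentially line for line: the paper also builds $H_\Phi$ from products of $\Phi$-images of $L^\infty$ functions, invokes the proof of Theorem~4 in \cite{Le-Pa-Th} to realize the factor $\ca_\Phi$ inside an ergodic infinite self-joining of $T_1$, and then uses density of $\mathrm{Im}\,\Phi$ to conclude $\ca_\Phi=\cb_2$. You have simply spelled out the last density step in slightly more detail than the paper does.
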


As all the systems determined by (infinite) joinings of zero
entropy systems have zero entropy and the systems given by
joinings of distal systems are also distal (for these results see
e.g.\ \cite{Gl}), Proposition~\ref{niezm} yields the following
conclusion.

\begin{Prop} Each automorphism which is a Markov quasi-image
of a zero entropy system has zero entropy.  Each automorphism
which is a Markov quasi-image of a distal system remains distal.
In particular, zero entropy and distality are invariants of Markov
quasi-similarity in the class of measure-preserving systems.\bez
\end{Prop}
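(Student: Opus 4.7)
The plan is to deduce the proposition directly from Proposition~\ref{niezm} together with two standard permanence properties of the classes of zero entropy systems and distal systems: closure under joinings (in fact, under arbitrary self-joinings, including infinite ones) and closure under taking factors.

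First I would invoke Proposition~\ref{niezm} to realize $T_2$ as a factor of some infinite ergodic self-joining $\lambda$ of $T_1$, living on a product space $(X_1^{\N},\cb_1^{\ot\N},\lambda)$ with each marginal equal to $\mu_1$. Call this self-joining system $(T_1)_\lambda^{\N}$. For the zero entropy claim, I would then cite the fact (see e.g.\ \cite{Gl}) that the entropy of a joining does not exceed the sum of the entropies of its coordinates; in particular if $h(T_1)=0$ then every \emph{finite} self-joining has zero entropy, and since an infinite self-joining is the inverse limit of its finite-coordinate marginals, it too has zero entropy. As entropy does not increase under passing to factors, $h(T_2)=0$. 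For the distality claim, I would recall that the class of distal systems is closed under (arbitrary) products, factors and inverse limits; hence any self-joining of a distal system is distal, and so is any of its factors, yielding distality of $T_2$.

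To conclude the ``in particular'' statement, observe that Markov quasi-similarity of $T_1$ and $T_2$ means precisely that each of them is a Markov quasi-image of the other. Hence, by the two implications just proved, if $T_1$ has zero entropy (resp.\ is distal) then so does $T_2$, and conversely. Thus zero entropy and distality are genuine invariants of Markov quasi-similarity.

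I do not anticipate a serious obstacle: the whole proof is essentially a one-line application of Proposition~\ref{niezm}, with the remaining input being classical structural facts about self-joinings. The only mildly delicate point is handling the \emph{infinite} nature of the self-joining; but this is resolved by the inverse-limit argument (for entropy) and by the closure of the distal class under inverse limits (for distality), both of which are standard in the joinings literature.
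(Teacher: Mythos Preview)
Your proposal is correct and follows exactly the paper's approach: the paper's proof is nothing more than the sentence preceding the proposition, which invokes Proposition~\ref{niezm} and the facts (cited from \cite{Gl}) that infinite joinings of zero entropy systems have zero entropy and joinings of distal systems are distal, together with closure under factors. One small remark: your ``hence'' in the distal case (from closure under products/factors/inverse limits to closure under joinings) is not literally justified by those three closure properties alone---a joining is not a factor of a product---but the statement that joinings of distal systems are distal is itself the standard fact from \cite{Gl}, so no harm is done.
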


As a matter of fact, we can prove that zero entropy is an
invariant of Markov quasi-similarity in the class of
measure-preserving systems in a simpler manner. Recall that $T_1$
and $T_2$ are said to be {\em disjoint} (in the sense of
Furstenberg \cite{Fu}) if the only joining between them is the
product measure. The following result will help us to indicate
further invariants of Markov quasi-similarity.
\begin{Lemma}\label{disj}
If $T_1$ is disjoint from $S$ and $T_2$ is a Markov quasi-image of
$T_1$ then $T_2$ is also disjoint from $S$.
\end{Lemma}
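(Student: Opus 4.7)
The plan is to argue directly with the Markov-operator formalism, without invoking Proposition~\ref{niezm}. Let $\Phi\colon L^2(X_1,\mu_1)\to L^2(X_2,\mu_2)$ be a Markov operator with dense range intertwining $U_{T_1}$ and $U_{T_2}$, which witnesses that $T_2$ is a Markov quasi-image of $T_1$. Pick an arbitrary joining $\rho$ of $T_2$ with $S$ on some space $(Y,\cc,\nu)$, and let $\Psi\colon L^2(X_2,\mu_2)\to L^2(Y,\nu)$ be the associated Markov operator intertwining $U_{T_2}$ and $U_S$. The goal is to show that $\Psi$ must be the ``product'' Markov operator $h\mapsto \bigl(\int h\,d\mu_2\bigr)\un$, which via~(\ref{q1}) is equivalent to $\rho=\mu_2\otimes\nu$.

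The first step is to form the composition $\Psi\circ\Phi\colon L^2(X_1,\mu_1)\to L^2(Y,\nu)$. Compositions of Markov operators are Markov, and this one intertwines $U_{T_1}$ with $U_S$; so by the correspondence recalled at the start of the Introduction, it arises from a unique joining of $T_1$ with $S$. The hypothesis $T_1\perp S$ then forces that joining to be $\mu_1\otimes\nu$, and a direct reading of~(\ref{q1}) identifies the Markov operator attached to a product joining as $f\mapsto\bigl(\int f\,d\mu_1\bigr)\un$. Consequently
\[
\Psi(\Phi f)=\Bigl(\int f\,d\mu_1\Bigr)\un\qquad\text{for every }f\in L^2(X_1,\mu_1).
\]

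The second step is to promote this identity into one involving $\Psi$ alone. Because $\Phi$ is Markov, $\Phi^*\un=\un$, hence $\int\Phi f\,d\mu_2=\langle\Phi f,\un\rangle=\langle f,\Phi^*\un\rangle=\int f\,d\mu_1$, so $\Phi$ preserves integrals. The previous identity therefore reads $\Psi(\Phi f)=\bigl(\int\Phi f\,d\mu_2\bigr)\un$. Since the range of $\Phi$ is dense and $\Psi$ is bounded, this extends to $\Psi(h)=\bigl(\int h\,d\mu_2\bigr)\un$ for every $h\in L^2(X_2,\mu_2)$, which is what was to be shown.

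I do not foresee any real obstacle: the whole argument rests on the fact that composition of Markov operators corresponds to composition of joinings (so disjointness of $T_1$ and $S$ can be transported through $\Phi$), together with the density of $\operatorname{Im}\Phi$ needed to pass from the factorisation $\Psi\circ\Phi$ to $\Psi$ itself. The only point worth watching is the bookkeeping of which direction of Markov operator corresponds to which ordering of factors in a joining, but~(\ref{q1}) fixes that convention unambiguously.
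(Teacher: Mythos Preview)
Your proof is correct and follows essentially the same line as the paper's: compose the intertwining Markov operator $\Phi$ (with dense range) with an arbitrary Markov intertwiner $\Psi$ between $T_2$ and $S$, observe that $\Psi\circ\Phi$ intertwines $T_1$ and $S$ and must therefore be the trivial (product) Markov operator, and use density of $\operatorname{Im}\Phi$ to conclude that $\Psi$ itself is trivial. The paper phrases the same argument in contrapositive form (a non-trivial $\Psi$ yields a non-trivial $\Psi\circ\Phi$) and omits the integral-preservation detail you spelled out, but the content is identical.
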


\begin{proof}Indeed, assume that $\Phi\circ U_{T_1}=U_{T_2}\circ \Phi$ and
$\Phi$ has dense range. If $T_2$ and $S$ are not disjoint then we
have a non-trivial Markov operator $\Psi$ intertwining $U_{T_2}$
and $U_S$. Since $\Phi$ has  dense range, $\Psi\circ\Phi$ is a
non-trivial Markov operator intertwining $U_{T_1}$ and $U_S$ and
therefore $T_1$ is not disjoint from $S$.
\end{proof}
Given a class $\cm$ of automorphisms denote by $\cm^\perp$ the
class of those transformations which are disjoint from all members
of $\cm$. In view of Lemma~\ref{disj}  we have the following.
\begin{Prop}\label{perp}
$\cm^\perp$ is closed under taking automorphisms which are Markov
quasi-images of members of $\cm^\perp$. In particular, if
$\cm=\cm^{\perp\perp}$ then $\cm$ is closed under taking
automorphisms which are Markov quasi-images of members of $\cm$.
\bez
\end{Prop}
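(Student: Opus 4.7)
The plan is to derive Proposition~\ref{perp} as an essentially immediate corollary of Lemma~\ref{disj}; the real content is already in that lemma, so there is no serious obstacle.

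For the first assertion, I would fix $T_1 \in \cm^{\perp}$ and suppose $T_2$ is a Markov quasi-image of $T_1$. To check that $T_2 \in \cm^{\perp}$ I need to show $T_2$ is disjoint from every $S \in \cm$. But by the definition of $\cm^\perp$, $T_1$ is disjoint from each such $S$, and Lemma~\ref{disj} then transfers this disjointness from $T_1$ to its Markov quasi-image $T_2$. This gives $T_2 \in \cm^\perp$, completing the first assertion.

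For the second assertion, I would apply the first assertion not to $\cm$ but to the class $\cm^{\perp}$ itself. This yields that $(\cm^{\perp})^{\perp} = \cm^{\perp\perp}$ is closed under Markov quasi-images. If we now assume $\cm = \cm^{\perp\perp}$, then $\cm$ itself inherits this closure property, which is the content of the second assertion. No further calculation is needed and the proof is essentially a two-line deduction from Lemma~\ref{disj}.
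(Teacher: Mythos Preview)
Your proposal is correct and matches the paper's approach exactly: the paper simply writes ``In view of Lemma~\ref{disj} we have the following'' and states the proposition without further argument, which is precisely the two-line deduction you describe.
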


If by $\mathcal{K}$ and $\mathcal{ZE}$ we denote the classes of
Kolmogorov automorphisms and zero entropy automorphisms
respectively then we have $\mathcal{K}=\mathcal{ZE}^\perp$
(\cite{Fu}) and therefore by Proposition~\ref{perp}
 we obtain the following.
\begin{Cor}
Every automorphism which is a Markov quasi-image  of a Kolmogorov
automorphism is also~K. In particular, $K$ property is an
invariant of Markov quasi-similarity in the class of
measure-preserving systems.\bez
\end{Cor}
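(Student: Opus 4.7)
The proof should be a direct application of Proposition~\ref{perp} combined with Furstenberg's classical disjointness theorem, so the plan is quite short. The key fact to invoke is Furstenberg's theorem from \cite{Fu}, which identifies the class $\mathcal{K}$ of Kolmogorov automorphisms with the disjointness-orthogonal complement of the class $\mathcal{ZE}$ of zero-entropy automorphisms, i.e.\ $\mathcal{K} = \mathcal{ZE}^\perp$. (An automorphism has completely positive entropy precisely when it is disjoint from every zero-entropy system.)

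With this identification in hand, the first assertion is immediate from the first half of Proposition~\ref{perp} applied with $\mathcal{M} = \mathcal{ZE}$: that proposition says that the class $\mathcal{ZE}^\perp$ is stable under Markov quasi-images. Since $\mathcal{ZE}^\perp = \mathcal{K}$, it follows that whenever $T_2$ is a Markov quasi-image of a Kolmogorov automorphism $T_1$, then $T_2$ itself is Kolmogorov. Equivalently, one can observe that $\mathcal{K} = \mathcal{K}^{\perp\perp}$ (which follows from the general identity $\mathcal{N}^\perp = \mathcal{N}^{\perp\perp\perp}$ applied to $\mathcal{N} = \mathcal{ZE}$) and then quote the second half of Proposition~\ref{perp} with $\mathcal{M} = \mathcal{K}$; either route works.

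For the \emph{in particular} statement about Markov quasi-similarity, note simply that if $T_1$ and $T_2$ are Markov quasi-similar then each is by definition a Markov quasi-image of the other, so the preservation of the K property under Markov quasi-images, just established, gives the claim. There is no real obstacle here: the entire weight of the argument sits in Furstenberg's disjointness theorem, which is invoked as a black box, and the mechanical consequence of Proposition~\ref{perp}.
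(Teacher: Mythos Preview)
Your proof is correct and matches the paper's approach exactly: the paper also derives the corollary from Proposition~\ref{perp} applied with $\mathcal{M}=\mathcal{ZE}$, using Furstenberg's identity $\mathcal{K}=\mathcal{ZE}^\perp$. The alternative route via $\mathcal{K}=\mathcal{K}^{\perp\perp}$ that you mention is fine too, but unnecessary.
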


\begin{Prob} Is the same true for Bernoulli automorphisms?
\end{Prob}

Notice that also $\mathcal{ZE}=\mathcal{K}^\perp$. Therefore we
can apply Proposition~\ref{perp} with $\mathcal{M}=\mathcal{ZE}$
to obtain that an automorphism which is a Markov quasi-image of a
zero entropy system has zero entropy.

\section{JP property and
Markov quasi-similarity}
\begin{Def}An ergodic automorphism $T$ on
$\xbm$ is said to have {\em the joining primeness} (JP) property
(see \cite{Le-Pa-Ro}) if for each pair of weakly mixing
automorphisms $S_1$ on $(Y_1,\mathcal{C}_1,\nu_1)$ and $S_2$ on
$(Y_2,\mathcal{C}_2,\nu_2)$ and for every indecomposable Markov
operator
$$\Phi: L^2(X,\mu)\to L^2(Y_1\times Y_2,\nu_1\otimes\nu_2)$$
intertwining $U_T$ and $U_{S_1\times S_2}$ we have (up to some
abuse of notation)  Im$\,\Phi\subset L^2(Y_1,\cc_1,\nu_1)$ or
Im$\,\Phi\subset L^2(Y_2,\cc_2,\nu_2)$.
\end{Def}

The class of JP automorphisms includes in particular the class of
simple systems (\cite{Ju-Ru}). For other natural classes of JP
automorphisms including some smooth systems see \cite{Le-Pa-Ro}
(we should however emphasize that a ``typical'' automorphism is JP
\cite{Le-Pa-Ro}).

Assume that $T$ is JP and $S_1, S_2,\ldots $ are weakly mixing.
Let $\Phi: L^2(X,\mu)\to L^2(Y_1\times
Y_2\times\ldots,\nu_1\otimes\nu_2\otimes\ldots)$ be a Markov
operator intertwining $U_T$ and $U_{S_1\times S_2\times\ldots}$.
Let $\Phi=\int_\Gamma\Phi_\gamma\,dP(\gamma)$ be the decomposition
corresponding to the ergodic decomposition of the joining
determined by $\Phi$. Slightly abusing notation, we claim that for
$P$-a.e.\ $\gamma\in\Gamma$
$$
\Phi_\gamma(L^2\xbm)\subset
L^2(Y_{i_\gamma},\cc_{i_\gamma},\nu_{i_\gamma}),\text{ for some }
i_\gamma\in\{1,2,\ldots\}.$$ Indeed, we use repeatedly the
definition of JP property: We represent $\Pi_{n\geq1}S_n$ as
$S_1\times\left(\Pi_{n\geq2}S_n\right)$ and if Im$\,\Phi_\gamma$
is not included in $L^2(Y_1,\nu_1)$ then Im$\,\Phi_\gamma\subset
L^2(Y_2\times Y_3\times\ldots,\nu_2\otimes\nu_3\ot\ldots)$. In the
next step we write $\Pi_{n\geq1}S_n=\left(S_1\times
S_2\right)\times\left(\Pi_{n\geq3}S_n\right)$ and we check if
Im$\,\Phi_\gamma\subset L^2(Y_1\times Y_2,\nu_1\ot\nu_2)$ (if it
is the case then Im$\,\Phi_\gamma\subset L^2(Y_2,\nu_2)$); if it
is not the case then Im$\,\Phi_\gamma\subset L^2(Y_3\times
Y_4\times\ldots,\nu_3\ot\nu_4\ot\ldots)$, etc. If for each
$n\geq1$, Im$\,\Phi_\gamma\perp L^2(Y_1\times\ldots\times
Y_n,\nu_1\ot\ldots\ot\nu_n)$, then  Im$\,\Phi_\gamma=0$ (since
functions depending on finitely many coordinates are dense), and
hence $\Phi_\gamma=0$.

It follows that for some $0\leq a_n\leq1$ with
$\sum_{n\geq1}a_n=1$
\begin{equation}\label{opisJPP}
\Phi=\sum_{n\geq1}a_n\Phi_n,\end{equation} where
Im$\,\Phi_n\subset L^2(Y_n,\cc_n,\nu_n)$. In particular,
\begin{equation}\label{jeszcze1cud}
\mbox{Im}\,\Phi\subset \bigoplus_{n\geq1} L^2(Y_n,\cc_n,\nu_n)
\subset L^2(Y_1\times
Y_2\times\ldots,\cc_1\ot\cc_2\ot\ldots,\nu_1\ot\nu_2\ot\ldots).\end{equation}
Note that the space $F:=\bigoplus L^2(Y_n,\nu_n)$ is closed and
$U_{S_1\times S_2\times\ldots}$-invariant.

\begin{Lemma}\label{opisJPP1} Under the above notation,
if  $\ca\subset\cc_1\ot\cc_2\ot\ldots$ is a factor
 of $S_1\times S_2\times\ldots$ and it is also
a Markov quasi-image of a JP automorphism $T$ then there exists
$n_0\geq1$ such that $\ca\subset\cc_{n_0}$; in other words the
factor given by $\ca$ is a factor of $S_{n_0}$.
\end{Lemma}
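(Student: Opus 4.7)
The plan is to apply the JP decomposition already established in the paragraphs preceding the lemma to a Markov operator witnessing the quasi-image relation, and then to exploit the fact that $L^\infty(\ca)$, being an algebra, is closed under multiplication in order to force $\ca$ to sit inside a single coordinate $\sigma$-algebra $\cc_{n_0}$. Concretely, I would first fix a Markov operator $\Phi_0:L^2(X,\mu)\to L^2(\ca)$ with dense range intertwining $U_T$ with the restriction of $U_{S_1\times S_2\times\ldots}$ to $L^2(\ca)$, and compose it with the natural (Markov) isometric inclusion $L^2(\ca)\hookrightarrow L^2(Y_1\times Y_2\times\ldots,\nu_1\otimes\nu_2\otimes\ldots)$ to obtain a Markov operator $\Phi$ of the type treated in~(\ref{opisJPP})--(\ref{jeszcze1cud}). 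That analysis yields $\mbox{Im}\,\Phi\subset F=\bigoplus_{n\geq1}L^2(Y_n,\nu_n)$, and since $F$ is closed and $\mbox{Im}\,\Phi$ is dense in $L^2(\ca)$, we conclude $L^2(\ca)\subset F$.

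Next I would refine $F$ to the orthogonal decomposition $F=\C\oplus\bigoplus_{n\geq1}L^2_0(Y_n,\nu_n)$, where $L^2_0$ denotes zero-mean functions, and use the unique expansion $f=c_f+\sum_{n\geq1}f_n$, $f_n\in L^2_0(Y_n,\nu_n)$, for each $f\in L^2(\ca)$. The key orthogonality observation is that for $n\neq m$ the product $f_ng_m$ of bounded zero-mean functions of disjoint coordinates lies in $L^2_0(Y_n,\nu_n)\otimes L^2_0(Y_m,\nu_m)$, which is orthogonal to $F$. Since $\ca$ is a $\sigma$-algebra, $(f-c_f)^2\in L^\infty(\ca)\subset F$; expanding this square, the off-diagonal part $\sum_{n\neq m}f_nf_m$ must vanish, forcing $f_nf_m=0$ on $Y_n\times Y_m$ for all $n\neq m$. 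Thus at most one of the $f_n$ is nonzero, i.e.\ every $f\in L^\infty(\ca)$ depends on a single coordinate $y_{n(f)}$ (constants being regarded as depending on any coordinate).

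If $\ca$ is trivial the lemma is vacuous, so assume $L^\infty(\ca)$ contains nonconstant functions and pick two of them $f,g$. Applying the same off-diagonal argument to $(f-c_f)(g-c_g)=f_{n(f)}g_{n(g)}\in L^\infty(\ca)\subset F$, the only element of $L^2_0(Y_{n(f)})\otimes L^2_0(Y_{n(g)})$ lying in $F$ when $n(f)\neq n(g)$ is zero, contradicting the nonvanishing of both factors. Hence a common index $n_0=n(f)$ serves every nonconstant $f\in L^\infty(\ca)$, whence $L^\infty(\ca)\subset L^\infty(Y_{n_0},\nu_{n_0})$, which is precisely $\ca\subset\cc_{n_0}$. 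The main technical nuisance I foresee is the pedantic bookkeeping of the tensor-product orthogonality inside $L^2(Y_1\times Y_2\times\ldots)$, but the logical backbone is simply~(\ref{jeszcze1cud}) combined with multiplicativity of $L^\infty(\ca)$.
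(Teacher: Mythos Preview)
Your argument is correct, and it reaches the same destination as the paper by a genuinely different path. Both proofs begin identically: compose the given Markov operator with the inclusion $L^2(\ca)\hookrightarrow L^2(\prod Y_n)$, invoke~(\ref{jeszcze1cud}), and use closedness of $F$ to obtain $L^2(\ca)\subset F$. From that point the paper works with \emph{indicator functions}: writing $\mathbf{1}_A-\nu(A)=\sum_n f_n$ as a sum of independent zero-mean variables, it appeals to the probabilistic fact that a sum of independent real random variables supported on a two-point set forces all summands but one to be a.s.\ constant; the constancy of $A\mapsto n_A$ is then deferred to~\cite{Ju-Le}. You instead exploit the \emph{algebra} structure of $L^\infty(\ca)$ together with the chaos decomposition: the projection of $(f-c_f)^2$ onto the second chaos $L^2_0(Y_n)\otimes L^2_0(Y_m)$ is $2f_nf_m$, and its vanishing (forced by $L^\infty(\ca)\subset F$) kills all but one coordinate; the same orthogonality trick applied to the product $(f-c_f)(g-c_g)$ replaces the citation to~\cite{Ju-Le}. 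Your route is fully self-contained and uses a single uniform mechanism (second-chaos orthogonality) for both steps, at the cost of the small computation identifying that projection; the paper's route is shorter on the page but leans on an external lemma and a reference.
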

\begin{proof} Asume that
$\Phi$ intertwines $U_T$ and the Koopman operator of the factor
action of $S_1\times S_2\times\ldots$ on $\ca$. Since the range of
$\Phi$ is dense in $L^2(\ca)$, it follows that $\Phi:L^2\xbm\to
L^2(\ca)\subset F$. We now use an argument from \cite{Ju-Le}: Take
$A\in\ca$. In view of~(\ref{jeszcze1cud}) we have
$$
{\mathbf
1}_A-(\nu_1\ot\nu_2\ot\ldots)(A)=f_1(y_1)+f_2(y_2)+\ldots$$ with
$f_n\in L^2_0(Y_n,\nu_n)$, $n\geq1$. Since the distribution of the
random variable ${\mathbf 1}_A-(\nu_1\ot\nu_2\ot\ldots)(A)$ is a
measure on a two element set and the random variables
$f_1,f_2,\ldots$ are independent, all of them but one, say
$f_{n_A}$, are equal to zero. In other words, $A\in\cc_{n_A}$. It
easily follows that the function $\ca\ni A\mapsto n_A$ is constant
(see \cite{Ju-Le}).
\end{proof}

Let $T$ be a simple weakly mixing automorphism. By the definition
of simplicity, it follows that each of its ergodic infinite
self-joinings is, as a dynamical system, isomorphic to a Cartesian
product $T^{\times n}$ with $n\leq\infty$. Since each simple
system has the JP property, in view of~Proposition~\ref{niezm} and
Lemma~\ref{opisJPP1} (in which $S_n=T$) we obtain the following.

\begin{Prop}\label{simple-Markov} Each automorphism which is a Markov quasi-image of a
simple map $T$ is a factor of $T$.\bez\end{Prop}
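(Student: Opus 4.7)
The plan is to combine the three structural results that have already been established: (i) Proposition~\ref{niezm}, which embeds the quasi-image as a factor of an infinite ergodic self-joining of $T$; (ii) the definition of simplicity, which identifies every such self-joining with a Cartesian power of $T$; and (iii) Lemma~\ref{opisJPP1}, which, thanks to $T$ having the JP property, localizes a factor sitting inside a Cartesian power on a single coordinate.

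More precisely, let $T_2$ be an automorphism which is a Markov quasi-image of $T$, so that there is a Markov operator $\Phi:L^2(X,\mu)\to L^2(X_2,\cb_2,\mu_2)$ with dense range intertwining $U_T$ and $U_{T_2}$. By Proposition~\ref{niezm}, $T_2$ is a factor of an infinite ergodic self-joining of $T$; that is, on $(X^{\N},\mu^{\otimes\infty})$ equipped with some $T^{\times\infty}$-invariant ergodic measure $\la$ with all marginals equal to $\mu$, we have a $T^{\times\infty}$-invariant sub-$\sigma$-algebra $\ca$ such that the factor action is isomorphic to $T_2$. Since $T$ is simple, every ergodic self-joining of $T$ (of any order, finite or infinite) is, as a dynamical system, measure-theoretically isomorphic to a Cartesian product $T^{\times n}$ for some $n\leq\infty$; hence after this identification we may regard $T_2$ as a factor of $T^{\times n}$.

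Now apply Lemma~\ref{opisJPP1} with $S_n=T$ for every $n$: since $T$ is simple and hence has the JP property, the factor $\ca$ of the Cartesian product $\Pi_n S_n=T^{\times n}$ which is a Markov quasi-image of the JP automorphism $T$ must in fact be contained in the $\sigma$-algebra of a single coordinate $\cc_{n_0}$. This exhibits $T_2$ as a factor of the one-coordinate copy of $T$, which is exactly the desired conclusion.

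The only point requiring a moment of care is the passage from ``factor of an infinite ergodic self-joining'' to a genuine Cartesian power, i.e.\ checking that the invocation of simplicity is legitimate for infinite self-joinings and that Lemma~\ref{opisJPP1}, which is phrased for countably many factors $S_1,S_2,\ldots$, applies in our setting; in the finite case $n<\infty$ we can either apply the lemma directly (its proof uses only that the $\sigma$-algebras $\cc_n$ generate and are mutually independent) or, equivalently, pad with trivial factors. Once this is observed, no further calculation is needed and the three ingredients glue together to give the proposition.
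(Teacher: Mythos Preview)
Your proposal is correct and follows essentially the same route as the paper: use Proposition~\ref{niezm} to realize the quasi-image as a factor of an ergodic infinite self-joining, invoke simplicity to identify that self-joining with a Cartesian power $T^{\times n}$, and then apply Lemma~\ref{opisJPP1} (with $S_n=T$, using that simple $\Rightarrow$ JP) to push the factor down to a single coordinate. Your remark about the finite-$n$ case and the applicability of Lemma~\ref{opisJPP1} is a reasonable piece of hygiene that the paper leaves implicit; note also that the paper tacitly takes $T$ to be weakly mixing (as stated in the sentence preceding the proposition), which is needed so that the hypotheses of Lemma~\ref{opisJPP1} are met when $S_n=T$.
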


It follows from the above proposition that if $T_1$ and $T_2$ are
weakly mixing simple automorphisms and  are Markov quasi-similar
then they are isomorphic.


\begin{Remark} In our example of $T_1$ and $T_2$ non-weakly isomorphic but
Markov-quasi-similar $T_2$ is a factor of $T_1$ but (because of
absence of weak isomorphism) $T_1$ is not a factor of $T_2$. Hence
the family of factors of $T_2$ is strictly included in the family
of automorphisms which are Markov quasi-images of $T_2$.
\end{Remark}

When we apply Proposition~\ref{simple-Markov} to the MSJ maps (see
\cite{Ju-Ru}) we obtain that such systems are Markov
quasi-similarly prime, that is we have the following.

\begin{Cor}\label{MSJMarkov} The only non-trivial automorphism which
is a Markov quasi-image of an MSJ system $T$ is $T$ itself.\bez
\end{Cor}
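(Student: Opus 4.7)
The plan is to deduce this corollary directly from Proposition~\ref{simple-Markov} together with two classical structural properties of MSJ systems established in \cite{Ju-Ru}: (i) every MSJ automorphism is simple, and (ii) every MSJ automorphism is \emph{prime}, i.e.\ its only measure-theoretic factors, up to isomorphism, are the trivial one-point system and $T$ itself.

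Given these ingredients, the argument is essentially a one-liner. Let $T$ be an MSJ automorphism and let $T_0$ be any non-trivial automorphism which is a Markov quasi-image of $T$. By (i) the system $T$ is simple, so Proposition~\ref{simple-Markov} applies and yields that $T_0$ is a factor of $T$. By (ii), the only non-trivial factor of $T$ up to isomorphism is $T$ itself; since $T_0$ is assumed non-trivial, this forces $T_0\cong T$, which is exactly the claim of the corollary.

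The only ingredient that is not already in the excerpt is property (ii), but this is a standard consequence of MSJ: any intermediate $T$-invariant sub-$\sigma$-algebra $\mathcal{A}$ strictly between the trivial one and the full $\cb$ would give rise to the relatively independent self-joining of $T$ over $\mathcal{A}$, an ergodic 2-fold self-joining of $T$ different from both the product measure and the off-diagonal joinings along powers of $T$, contradicting the definition of minimal self-joinings. Thus no genuine obstacle arises; the whole point is that MSJ is exactly the right hypothesis to upgrade the ``$T_0$ is a factor of $T$'' conclusion of Proposition~\ref{simple-Markov} into ``$T_0$ is isomorphic to $T$ or trivial''.
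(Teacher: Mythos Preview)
Your proof is correct and follows exactly the paper's approach: apply Proposition~\ref{simple-Markov} (using that MSJ implies simple) to conclude that any Markov quasi-image of $T$ is a factor of $T$, then use primeness of MSJ systems to finish. The paper states this derivation in a single sentence with a reference to \cite{Ju-Ru}; your additional justification of primeness is a welcome elaboration but not a different route.
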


\begin{Remark}
Assume that $T$ enjoys the MSJ property. Take $\Phi_1,\Phi_2$ two
joinings of $T$ and $T\times T$ so that Im$\,\Phi_1\cap
\left(L^2(X,\mu)\ot{\mathbf 1}_X\right)\neq\{0\}$ and
Im$\,\Phi_2\cap \left({\mathbf 1}_X\ot
L^2(X,\mu)\right)\neq\{0\}$. Then  $\Phi:=a\Phi_1+(1-a)\Phi_2$ is
a Markov operator intertwining  $U_T$ and $U_{T\times T}$ and if
$0<a<1$, then the range of $\Phi$ is {\bf not} dense in
$L^2(\ca_\Phi)$. Indeed, $\ca_\Phi$ is either $T\times T$ or
$T\odot T$ (the factor of $T\times T$ determined by the
$\sigma$-algebra of sets invariant under exchange of coordinates)
and the claim follows from Lemma~\ref{opisJPP1}. This is the
answer to a question raised by Fran\c{c}ois Parreau in a
conversation with the second named author of the note.

It means that if we try to define Markov quasi-image by requiring
that $\ca_\Phi=\cb_2$ instead of requiring that the range of
$\Phi$ is dense in $L^2(X_2,\cb_2,\mu_2)$ then we obtain a
strictly weaker notion.
\end{Remark}

\section{Final remarks and problems}\label{BB}
 Notice that the joining of $T_1$ and $T_2$ corresponding to the
Markov operator in Section~\ref{przyklad} and based on
constructions from \cite{Le} is not ergodic (i.e.\ the Markov
operator is decomposable). In fact,  in our construction of two
non-weakly isomorphic Markov quasi-similar automorphisms $T_1$ and
$T_2$ no Markov operator corresponding to an ergodic joining
between $T_1$ and $T_2$ can have dense range. Indeed, first recall
that ergodic Markov quasi-similar automorphisms have the same
Kronecker factors. Then notice that $T_1$ and $T_2$ are compact
abelian group extensions of the same (in \cite{Le} this is the
classical adding machine system) Kronecker factor. Hence, assume
that $T$ is an ergodic automorphism with discrete spectrum and let
$\phi:X\to G$, $\psi:X\to H$ be ergodic cocycles with values in
compact abelian groups $G$ and $H$ respectively. We then have the
following.
\begin{equation}\label{groupext}
\begin{aligned} \mbox{$T_\phi$ and $T_\psi$ are Markov quasi-similar via {\bf
indecomposable}}\\
\mbox{Markov operators if and only if they are weakly isomorphic.}
\end{aligned}\end{equation}
Indeed, every ergodic joining between such systems is the
relatively independent extension of the graph joining given by an
isomorphism $I$ of so called natural factors $T_{\phi J}$ and
$T_{\psi F}$ acting on $X\times G/J$ and $X\times H/F$
respectively, see \cite{Le-Me}. The Markov operator $\Phi$
corresponding to such a joining is determined by the orthogonal
projection on the $L^2(X\times H/F,\mu\ot\la_{H/F})$; in
particular the range of $\Phi$ is closed. Therefore it has  dense
range only if  Im$\,\Phi=L^2(X\times H,\mu\ot\la_H)$ which means
that in fact $I$ settles a metric isomorphism of $T_\psi$ and a
factor of $T_\phi$. In other words, $T_\psi$ is a factor of
$T_\phi$.

This shows that there exist two ergodic automorphisms which are
Markov quasi-similar but  Markov quasi-similarity cannot be
realized by indecomposable Markov operators with dense ranges.

We have been unable to construct an indecomposable 1-1 Markov
operator $\Phi$ with dense range intertwining the Koopman
operatros given by two non-isomorphic ergodic automorphisms $T_1$
and $T_2$. One might think about such a construction using Markov
operators given as convex combinations of $U_{S_i}$ where $S_i$
are space isomorphisms which are {\bf not} intertwining $T_1$ and
$T_2$ (see e.g.\ \cite{Da} for the notion of near simplicity where
similar idea is applied).

It seems that Proposition~\ref{equivalence} rules out a
possibility to find two  Markov quasi-similar Gaussian
automorphisms which are not isomorphic by a use of so called
Gaussian joinings \cite{Le-Pa-Th} (recall that Gaussian joinings
are ergodic joinings). Indeed, once a Markov quasi-similarity is
given by an integral of Markov operators corresponding to Gaussian
joinings, it sends chaos into chaos (see \cite{Le-Pa-Th} for
details). In particular, first chaos is sent into first chaos, and
we obtain quasi-similarity of the unitary actions restricted to
the first chaos. By Proposition~\ref{equivalence} these actions on
the first chaos are spectrally equivalent which in turn implies
measure-theoretic isomorphism of the Gaussian systems.

We do not know however if we can have two non-weakly isomorphic
Poisson suspension systems  which are Markov quasi-similar by a
use of Poissonian joinings (which are ergodic), see
\cite{De-Fr-Le-Pa} and \cite{Ro}.

\begin{Prob} Recall  that in the construction carried out in
Section~\ref{przyklad}, $T_2$ was a factor of $T_1$. Is it
possible to construct Markov quasi-similar automorphisms $T_1$ and
$T_2$ such that $T_1$ and $T_2$ have no common (non-trivial)
factors? Of course such $T_1$ and $T_2$ must not be disjoint (see
\cite{Fu}).

The most ``popular'' construction of a pair of non-disjoint
systems without common factors is $(T, T\odot T)$ (for a
particular $T$; see \cite{Ju-Le}, \cite{Ru}). Notice however that
these two automorphisms are not Markov quasi-similar if $T$ has
the JP property (see Lemma~\ref{opisJPP1}), that is, in all known
cases where $T$ and $T\odot T$  have no common (isomorphic)
non-trivial factors.
\end{Prob}

\begin{Prob} As we have already noticed in Remark~\ref{similarity}, Markov
quasi-affinity implies Markov quasi-similarity. Are these notions
equivalent? If the answer is positive then each weakly isomorphic
transformations would have to be Markov quasi-affine. Are examples
of weakly isomorphic non-isomorphic automorphisms from
\cite{Kw-Le-Ru}, \cite{Le} or \cite{Ru} Markov quasi-affine?
\end{Prob}

\begin{Prob} The examples of Markov quasi-similar automorphisms which are
not isomorphic presented in this note have infinite spectral
multiplicity. Is it possible to find such examples in the class of
systems with simple spectrum (or of finite spectral multiplicity)?
In the class of rank one systems? Recall that in case of finite
spectral multiplicity systems their weak isomorphism implies
isomorphism, see e.g.\ \cite{Ne}.
\end{Prob}
\section*{Acknowledgements} The authors would like to thank Vitaly
Bergelson for fruitful discussions and stimulating questions on
the subject.  We would like also to thank Alexander Gomilko for
his remarks on the content of Section~\ref{calkowanie}.


\begin{thebibliography}{99}
\bibitem{An} H. Anzai, {\em Ergodic skew product transformations
on the torus}, Osaka J. Math. 3 (1951), 83-99.

\bibitem{Da}A. Danilenko, {\em On simplicity concepts for ergodic actions}, J. Anal. Math. 102 (2007), 77--117.

\bibitem{De-Fr-Le-Pa}Y. Derriennic, K. Fr\c{a}czek, M. Lema\'nczyk, F. Parreau, {\em
Ergodic automorphisms whose weak closure of off-diagonal measures
consists of ergodic self-joinings}, Coll. Math.\ 110 (2008),
81-115.

\bibitem{Fo-Na}C. Foia\c{s}, B. Sz.-Nagy,  {\em Harmonic analysis of operators on Hilbert space},
North-Holland Publishing Co., Amsterdam-London; American Elsevier
Publishing Co., Inc., New York; Akadémiai Kiadó, Budapest 1970.

\bibitem{Fo-St}C. Foia\c{s}, S. Stratila, {\em Ensembles de
Kronecker dans la th\'eorie ergodique}, C.R. Acad. Sci. Paris, sr.
A-B 267 (1968), A166-A168.

\bibitem{Fu}H. Furstenberg, {\em Disjointness in ergodic theory,
minimal sets, and a problem of Diophantine approximation}, Math.
Systems Th.\ 1 (1967), 1-49.

\bibitem{Gl}E. Glasner, {\em Ergodic Theory via Joinings}, Math. Surveys Monogr. 101, Amer. Math. Soc.,
Providence, RI, 2003.

\bibitem{Ho}C. Hoffman, {\em A $K$ counterexample machine}, Trans. Amer. Math. Soc. 351 (1999), 4263--4280.

\bibitem{Ju-Le}A. del Junco, M. Lema\'nczyk,
{\em Generic spectral properties of measure-preserving maps and
applications}, Proc. Amer. Math. Soc. 115 (1992), 725--736.



\bibitem{Ju-Ru}A. del Junco, D. Rudolph, {\em On ergodic actions whose self-joinings are graphs},
Ergodic Theory Dynam. Systems 7 (1987),  531--557.


\bibitem{Ka-Th}A.B. Katok, J.-P. Thouvenot, {\em Spectral
Properties and Combinatorial Constructions in Ergodic Theory},
 Handbook of dynamical systems. Vol. 1B, 649--743, Elsevier B. V., Amsterdam, 2006.


\bibitem{Kw-Le-Ru}J. Kwiatkowski, M. Lema\'nczyk, D. Rudolph, {\em
Weak isomorphism of measure-preserving diffeomorphisms}, Israel J.
Math. 80 (1992), 33--64.

\bibitem{Le}M. Lema\'nczyk, {\em Weakly isomorphic transformations that are not isomorphic},
Probab. Theory Related Fields 78 (1988),  491--507.


\bibitem{Le-Me}M. Lema\'nczyk, M.K. Mentzen, {\em Compact subgroups in the centralizer
of natural factors of an ergodic group extension of a rotation
determine all factors}, Ergodic Theory Dynam. Systems  10  (1990),
763--776.

\bibitem{Le-Pa}M. Lema\'nczyk, F. Parreau, {\em Lifting mixing
properties by Rokhlin cocycles}, preprint.

\bibitem{Le-Pa-Ro}M. Lema\'nczyk, F. Parreau, E. Roy, {\em Systems with simple convolutions,
distal simplicity and disjointness with infintely divisible
systems}, preprint.

\bibitem{Le-Pa-Th}M. Lema\'nczyk, F. Parreau, J.-P. Thouvenot, {\em Gaussian automorphisms whose
 ergodic self-joinings are Gaussian}, Fund. Math. 164 (2000),  253--293.

\bibitem{Ne}D. Newton, {\em Coalescence and spectrum of automorphisms of a Lebesgue space},
Z. Wahrscheinlichkeitstheorie und Verw. Gebiete  19  (1971),
117--122.

\bibitem{Or}D.S. Ornstein, {\em Ergodic theory, randomness, and dynamical systems},
James K. Whittemore Lectures in Mathematics given at Yale
University. Yale Mathematical Monographs, No. 5. Yale University
Press, New Haven, Conn.-London, 1974.


\bibitem{Pa}W. Parry, {\em Topics in ergodic theory}, Cambridge Tracts in Mathematics, 75.
Cambridge University Press, Cambridge-New York, 1981.

\bibitem{Ro}E. Roy, {\em Mesures de Poisson, infinie divisibilité et
propriétés ergodiques}, Th\`ese de doctorat de l'Universit\'e
Paris 6 (2005).

\bibitem{Ru}D. Rudolph, {\em An example of a measure preserving map
with minimal self-joinings, and applications}, J. Analyse Math. 35
(1979), 97--122.

\bibitem{Ry}V.V.\ Ryzhikov, {\em Joinings, intertwining operators,
factors and mixing properties of dynamical systems\/}, Russian
Acad.\ Izv.\ Math.\ {\bf 42} (1994), 91-114.


\bibitem{Si} Ya.G. Sinai, {\em On weak isomorphism of measure-preserving
transformations}, Mat. Sb. 63 (1964), 23--42; English translation:
Transl. Amer. Math. Soc. 57 (1966), 123--143.


\bibitem{Ve}A.M.\ Vershik, {\em Polymorphisms, Markov processes,
and quasi-similarity}, Discrete and Continuous Dynam. Systems {\bf
13} (2005), 1305-1324.

\end{thebibliography}
\end{document}